 \newtheorem{thm}{Theorem}[section]
 \newtheorem{cor}[thm]{Corollary}
 \newtheorem{lem}[thm]{Lemma}
 \newtheorem{prop}[thm]{Proposition}
 \theoremstyle{definition}
 \newtheorem{defn}[thm]{Definition}
 \theoremstyle{remark}
 \newtheorem{rem}[thm]{Remark}
 \theoremstyle{remark}
 \theoremstyle{definition}
 \numberwithin{equation}{section}
 \newcommand{\Hom}{\mathrm{Hom}}
 \newcommand{\Spec}{\mathrm{Spec}}
 \newcommand{\Spf}{\mathrm{Spf}}
 \newcommand{\Aut}{\mathrm{Aut}}
 \newcommand{\End}{\mathrm{End}}
 \newcommand{\ord}{\mathrm{ord}}
 \newcommand{\Gal}{\mathrm{Gal}}
 \newcommand{\coker}{\mathrm{coker}}
 \newcommand{\Ram}{\mathrm{Ram}}
 \newcommand{\opp}{\mathrm{opp}}
 \newcommand{\Mass}{\mathrm{Mass}}
 \newcommand{\im}{\mathrm{Im}}
 \newcommand{\Id}{\mathrm{Id}}
 \newcommand{\alg}{\mathrm{alg}}
 \newcommand{\Fr}{\mathrm{Fr}}
 \newcommand{\inv}{\mathrm{inv}}
 \renewcommand{\mod}{\mathrm{mod}}
 \newcommand{\Nr}{\mathrm{Nr}}
 \newcommand{\fp}{\mathfrak p}
 \newcommand{\fn}{\mathfrak n}
 \newcommand{\fX}{\mathfrak X}
 \newcommand{\cO}{\mathcal{O}}
 \newcommand{\cK}{\mathcal{K}}
 \newcommand{\cA}{\mathcal{A}}
 \newcommand{\cB}{\mathcal{B}}
 \renewcommand{\cR}{\mathcal{R}}
 \renewcommand{\cD}{\mathcal{D}}
 \newcommand{\cE}{\mathcal{E}}
 \newcommand{\cI}{\mathcal{I}}
 \renewcommand{\cH}{\mathcal{H}}
 \newcommand{\cT}{\mathcal{T}}
 \newcommand{\C}{\mathbb{C}}
 \newcommand{\E}{\mathbb{E}}
 \newcommand{\F}{\mathbb{F}}
 \newcommand{\M}{\mathbb{M}}
 \newcommand{\Q}{\mathbb{Q}}
 \newcommand{\Z}{\mathbb{Z}}
 \newcommand{\A}{\mathbb{A}}
 \newcommand{\bF}{\mathbf{f}}
 \newcommand{\Ell}{\mathcal{E}\ell\ell}
 \newcommand{\ctimes}{\widehat{\otimes}}
 \newcommand{\bs}{\setminus}
 \newcommand{\G}{\Gamma}
 \newcommand{\La}{\Lambda}
 \newcommand{\la}{\lambda}
 \newcommand{\comr}[1]{[\![#1]\!]}
 \newcommand{\comf}[1]{(\!(#1)\!)}
 \newcommand{\twist}[1]{{^\tau}\!#1}
\begin{document}

\title{Endomorphisms of exceptional $\cD$-elliptic sheaves}

\author{Mihran Papikian}

\address{Department of Mathematics, Pennsylvania State University, University Park, PA 16802}

\email{papikian@math.psu.edu}

\subjclass{Primary 11G09; Secondary 11R58, 16H05}

\thanks{The author was supported in part by NSF grant DMS-0801208 and Humboldt Research Fellowship.}

\date{}

\begin{abstract}
We relate the endomorphism rings of certain $\cD$-elliptic sheaves
of finite characteristic to hereditary orders in central division
algebras over function fields.
\end{abstract}

\maketitle



\section{Introduction} The endomorphism rings of abelian varieties have long
been a subject of intensive investigation in number theory. One of
the earliest results in this area was the determination by Deuring
of the endomorphism rings of elliptic curves over finite fields. His
results were later generalized to higher dimensional abelian
varieties by Honda, Tate and Waterhouse \cite{Waterhouse}. These
results have important applications, e.g., they play a key role in
calculations of local zeta functions of Shimura varieties.

In \cite{Drinfeld}, Drinfeld introduced a certain function field
analogue of abelian varieties; these objects are now called Drinfeld
modules. Denote by $\F_q$ the finite field with $q$ elements. Let
$X$ be a smooth, projective, geometrically connected curve defined
over $\F_q$. Let $F=\F_q(X)$ be the function field of $X$. Fix a
place $\infty$ of $F$ (in Drinfeld's theory this plays the role of
an archimedean place). Let $o\neq \infty$ be another place of $F$.
Denote by $\F_o$ the residue field at $o$. In \cite{Drinfeld2},
Drinfeld proved the analogue of Honda-Tate theorem for Drinfeld
modules defined over extensions of $\F_o$. In \cite{GekelerJA},
Gekeler extended Drinfeld's results, in particular, he proved that
for a rank-$d$ supersingular Drinfeld module $\phi$ over
$\overline{\F}_o$ the endomorphism ring $\End(\phi)$ is a maximal
order in the central division algebra over $F$ of dimension $d^2$,
which is ramified exactly at $o$ and $\infty$ with invariants $-1/d$
and $1/d$, respectively. Moreover, there is a bijection between the
isomorphism classes of rank-$d$ supersingular Drinfeld modules over
$\overline{\F}_o$ and the left ideal classes of $\End(\phi)$ (see
\cite[Thm. 4.3]{GekelerJA}).

In \cite{LRS}, Laumon, Rapoport and Stuhler introduced the notion of
$\cD$-elliptic sheaves, which is a generalization of the notion of
Drinfeld modules. (One can think of these objects as function field
analogues of abelian varieties equipped with an action of a maximal
order in a simple algebra over $\Q$.) In Laumon-Rapoport-Stuhler
theory one needs to fix a central simple algebra $D$ over $F$ of
dimension $d^2$ which is split at $\infty$, and a maximal
$\cO_X$-order $\cD$ in $D$ (see $\S$\ref{sOrders} for definitions).
In \cite[Ch. 9]{LRS}, the authors develop the analogue of Honda-Tate
theory for $\cD$-elliptic sheaves over $\overline{\F}_o$ with zero
$o$ and pole $\infty$, assuming $D$ is split at $o$. The assumption
that $D$ is split at $o$ is not superficial. When $D$ is ramified at
$o$, to obtain a reasonable theory of $\cD$-elliptic sheaves with
zero $o$ and pole $\infty$, one has to assume at least that
$D\otimes_F F_o$ is the division algebra with invariant $1/d$ over
$F_o$, where $F_o$ is the completion of $F$ at $o$. Such
$\cD$-elliptic sheaves play a crucial role in the function field
analogue of \u{C}erednik-Drinfeld uniformization theory developed by
Hausberger \cite{Hausberger}.

Assume $D_o:=D\otimes_F F_o$ is the $d^2$-dimensional central
division algebra with invariant $1/d$ over $F_o$. In this paper we
define a subclass of $\cD$-elliptic sheaves over $\overline{\F}_o$,
which we call \textit{exceptional}, and which are distinguished by a
particularly simple relationship between the actions of $D_o$ and
the Frobenius at $o$; see Definition \ref{def3.1}. In general,
exceptional $\cD$-elliptic sheaves do not correspond to points on
the moduli schemes constructed in \cite{LRS} or \cite{Hausberger},
so they are not very natural from moduli-theoretic point of view.
Nevertheless, we show that the theory of endomorphism rings of these
objects is similar to the theory of endomorphism rings of
supersingular Drinfeld modules. The main result is the following
(see Theorems \ref{thm2.2} and \ref{thm2.3}):

\begin{thm}\label{thm-main}
Let $\E$ be an exceptional $\cD$-elliptic sheaf over
$\overline{\F}_o$ of type $\bF$. Then $\End(\E)$ is a hereditary
$\cO_X$-order in the central division algebra $\bar{D}$ over $F$
with invariants
$$
\inv_x(\bar{D})=\left\{
  \begin{array}{ll}
    1/d, & x=\infty; \\
    0, & x=o; \\
    \inv_x(D), & x\neq o,\infty.
  \end{array}
\right.
$$
This order is maximal at every place $x\neq o$, and at $o$ it is
isomorphic to a hereditary order of type $\bF$. There is a bijection
between the set of isomorphism classes of exceptional $\cD$-elliptic
sheaves over $\overline{\F}_o$ of type $\bF$ and the isomorphism
classes of locally free rank-$1$ right $\End(\E)$-modules.
\end{thm}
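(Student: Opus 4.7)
The plan is to prove the theorem by the standard local--global method for endomorphism algebras of special objects in Drinfeld-type theories, adapted to the $\cD$-elliptic setting and modified to account for the hereditary (rather than maximal) structure at $o$ that is imposed by the combinatorial type $\bF$. The argument splits naturally into three stages: determine the algebra $\bar D$ via its local invariants, compute the integral structure of $\End(\E)$ at each place, and finally construct the bijection with rank-one right ideal classes by a Serre-tensor construction.

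First I would compute $\bar D := \End(\E)\otimes_F F$ place by place. At a finite $x\neq o,\infty$, the $x$-adic rational Tate module $V_x(\E)$ is free of rank one over $D_x$, so the commutant of the $D_x$-action inside $\End_{F_x}V_x(\E)$ is isomorphic to $D_x$ up to the appropriate opposite convention; a Tate-type faithful-action statement then identifies the local piece of $\End(\E)\otimes F_x$ and gives the invariant $\inv_x(D)$. At $\infty$ the analysis of the formal structure of $\E$ at $\infty$ (the $\cD$-elliptic analogue of a rank-$d$ formal $\cO_\infty$-module) produces the invariant $1/d$, exactly as in Gekeler's computation \cite{GekelerJA} at the place at infinity for supersingular Drinfeld modules. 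At $o$, the exceptional condition of Definition \ref{def3.1} forces the $o$-adic Frobenius to stand in a particularly rigid relationship with the $\cD_o$-action, and computing the commutant in this situation identifies the local algebra with $M_d(F_o)$, contributing invariant $0$. Summing the invariants to zero (which a global dimension count also imposes) pins down $\bar D$ as the division algebra in the statement.

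Next I would determine the integral structure. Away from $o$ the Tate lattice identifies $\End(\E)\otimes\cO_x$ with the appropriate version of $\cD_x$, which is a maximal $\cO_x$-order; so $\End(\E)$ is maximal at every $x\neq o$. At $o$, the exceptional hypothesis produces an explicit $\cD_o$-stable lattice chain in $V_o(\E)$ whose combinatorial data is precisely the type $\bF$, and the endomorphism ring of this chain is, by definition, the hereditary order of type $\bF$ in $M_d(F_o)$. A global $\cO_X$-order that is hereditary at every place is itself hereditary, which finishes the order-theoretic half of the theorem. For the bijection, I would send a locally free rank-one right $\End(\E)$-module $\cI$ to the Serre tensor product $\cI\otimes_{\End(\E)}\E$, verify it is again exceptional of type $\bF$, and check that the inverse assignment $\E'\mapsto\Hom(\E,\E')$ makes sense and produces a locally free rank-one bimodule; the composition is the identity on isomorphism classes by a Morita-type calculation that works uniformly for hereditary orders.

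The main obstacle will be the analysis at $o$. One must translate the algebraic exceptional condition into an explicit $\cD_o$-stable lattice chain, then simultaneously extract from it both the invariant-$0$ contribution to $\bar D$ and the hereditary-of-type-$\bF$ structure of the integral endomorphism ring. In particular, the hereditary (as opposed to purely maximal) setting means that for the bijection one has to verify that \emph{every} locally free right $\End(\E)$-module of rank one—not merely those coming from invertible ideals of a maximal order—arises from some exceptional $\cD$-elliptic sheaf of type $\bF$, which is a genuinely new point beyond the supersingular Drinfeld case.
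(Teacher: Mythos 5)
There is a genuine gap in your first stage. You write ``I would compute $\bar D := \End(\E)\otimes_F F$ place by place,'' but this presupposes the crucial fact that $\End(\E)\otimes_F F$ is a central simple $F$-algebra with well-defined local invariants. That is precisely what must be proved, and the local commutant computations alone do not give it: a priori $\End_D(N,\varphi)$ could fail to be simple, or could have a center strictly larger than $F$, in which case ``summing the invariants to zero'' is meaningless. The paper handles this by computing the $\varphi$-pair $(\widetilde F,\widetilde\Pi)$ of the generic fibre (the Honda--Tate invariant of the $\varphi$-space): from the three local descriptions of $\varphi_x$ on $M_x$ it deduces $\varphi^{dh}\in F^\times$, hence $\widetilde F = F$, and then invokes the isotypicality of $(N,\varphi)$ from \cite[Lem.~9.6]{LRS} together with \cite[Thm.~A.6]{LRS} to conclude that $\End(N,\varphi)$ is a CSA of dimension $d^4$ over $F$ with known invariants. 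The double centralizer theorem then delivers both the dimension $d^2$ and the invariants of the commutant $\End_D(N,\varphi)$. The same $\varphi$-pair machinery reappears in the bijection step: to show $\Hom(\E,\E')$ is a rank-one lattice you first need $\E$ and $\E'$ to be isogenous, i.e.\ to have isomorphic generic fibres, which the paper gets from equality of $\varphi$-pairs and \cite[(9.12)]{LRS}. Your proposal never addresses why $\Hom(\E,\E')\neq 0$.

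There is also a local imprecision worth flagging: ``the Tate lattice identifies $\End(\E)\otimes\cO_x$ with the appropriate version of $\cD_x$'' conflates orders in two different algebras. At $x\neq o,\infty$ the invariants of $D_x$ and $\bar D_x$ agree, so an abstract isomorphism exists, but the paper's actual argument is that the étale structure gives $M_{x,0}\cong \Lambda_x\ctimes k$ for a lattice $\Lambda_x\subset D_x$ whose right order contains $\cD_x$, and maximality of the right order forces maximality of the left order, which sits inside $\End_{\cD_x}(M_x,\varphi_x)$. At $\infty$ the algebras $D_\infty\cong\M_d(F_\infty)$ and $\bar D_\infty$ (a division algebra) are not even isomorphic, so there is no ``version of $\cD_\infty$'' to identify with; the paper instead Morita-reduces to the simple $\varphi$-space $N_{d,-1}$ and computes the endomorphism order of a single lattice directly. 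Your second and third stages (lattice chain at $o$, Serre tensor for the bijection) do match the paper's arguments in Propositions~\ref{propEndM} and Theorem~\ref{thm2.3}, and your final observation that the hereditary case requires realizing every rank-one right module is indeed the point that the explicit construction $M'_x=\cI_x\otimes_{\cA_x}M_x$ resolves. But the first stage as written cannot be carried out without importing the Honda--Tate classification of $\varphi$-spaces.
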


The type of an exceptional $\cD$-elliptic sheaf is determined by the
action of the Frobenius at $o$, and the type of a hereditary order
determines the order up to an isomorphism (see $\S$\ref{sOrders}).
In $\S$\ref{sMass}, we use Theorem \ref{thm-main} to prove a
mass-formula for exceptional $\cD$-elliptic sheaves, and discuss a
geometric application of this formula. In $\S$\ref{sLast}, we
explain how the argument in the proof of Theorem \ref{thm-main} can
be used to prove a theorem about endomorphism rings of supersingular
$\cD$-elliptic sheaves over $\overline{\F}_o$, which implies
Gekeler's result mentioned earlier as a special case (in
$\S$\ref{sLast} we assume that $D$ is split at $o$).

\vspace{0.1in}

\noindent\textbf{Notation.} Unless specified otherwise, the
following notation is fixed throughout the article.

$\bullet$ $k$ is a fixed algebraic closure of $\F_q$ and $\Fr_q:
k\to k$ is the automorphism $x\mapsto x^q$.

$\bullet$ $|X|$ denotes the set of closed points on $X$ (equiv. the
set of places of $F$).

$\bullet$ For $x\in |X|$, $\cO_x$ is the completion of $\cO_{X,x}$,
and $F_x$ (resp. $\F_x$) is the fraction field (resp. the residue
field) of $\cO_x$. The \textit{degree} of $x$ is
$\deg(x):=[\F_x:\F_q]$, and $q_x:=q^{\deg(x)}=\#\F_x$. We fix a
uniformizer $\pi_x$ of $\cO_x$.

$\bullet$ $\A_F:=\prod_{x\in |X|}'F_x$ denotes the adele ring of
$F$, and for a set of places $S\subset |X|$, $\A_F^S:=\prod_{x\in
|X|-S}'F_x$ denotes the adele ring outside of $S$.

$\bullet$ The \textit{zeta-function of $X$} is
$$
\zeta_X(s)=\prod_{x\in |X|}(1-q_x^{-s})^{-1}, \quad s\in \C.
$$

$\bullet$ For any ring $R$ we denote by $R^\times$ its subgroup of
units.

$\bullet$ $\M_d$ denotes the ring of $d\times d$ matrices.

\section{Orders}\label{sOrders} For the convenience of the reader, we recall some
basic definitions and facts concerning orders over Dedekind domains.
A standard reference for these topics is \cite{Reiner}.

Let $R$ be a Dedekind domain with quotient field $K$ and let $A$ be
a central simple $K$-algebra. For any finite dimensional $K$-vector
space $V$, a \textit{full $R$-lattice} in $V$ is a finitely
generated $R$-submodule $M$ in $V$ such that $K\otimes_R M\cong V$.
An \textit{$R$-order} in the $K$-algebra $A$ is a subring $\La$ of
$A$, having the same unity element as $A$, and such that $\La$ is a
full $R$-lattice in $A$. A \textit{maximal $R$-order} in $A$ is an
$R$-order which is not contained in any other $R$-order in $A$. A
\textit{hereditary $R$-order} in $A$ is an $R$-order $\La$ which is
a hereditary ring, i.e., every left (equiv. right) ideal of $\La$ is
a projective $\La$-module. Maximal orders are hereditary. Being
maximal or hereditary are local properties for orders: an $R$-order
$\La$ in $A$ is maximal (resp. hereditary) if and only if
$\La_\fp:=\La\otimes_R R_\fp$ is a maximal (resp. hereditary)
$R_\fp$-order in $A_\fp:=A\otimes_K K_\fp$ for all prime ideals
$\fp\lhd R$, where $R_\fp$ and $K_\fp$ are the $\fp$-adic
completions of $R$ and $K$.

Let $I$ be a full $R$-lattice in $A$. Define the \textit{left order
of $I$}
$$
O_\ell(I)=\{a\in A\ |\ aI\subseteq I\}.
$$
It is easy to see that $O_\ell(I)$ is an $R$-order in $A$. One
similarly defines the right order $O_r(I)$ of $I$.

\vspace{0.1in}

Assume $R$ is a complete discrete valuation ring with a uniformizer
$\pi$ and fraction field $K$. Let $\bF=(f_0,\dots, f_{d-1})$ be a
$d$-tuple of non-negative integers such that $\sum_{i=0}^{d-1}
f_i=d$. Denote by $\M_d(\bF,R)$ the subgroup of $\M_d(R)$ consisting
of matrices of the form $(m_{ij})$, where $m_{ij}$ ranges over all
$f_i\times f_j$ matrices with entries in $R$ if $i\geq j$, and over
all $f_i\times f_j$ matrices with entries in $\pi R$ if $i< j$ (a
block of size $0$ is assumed to be empty), e.g., if
$\bF=(d,0,\dots,0)$ then $\M_d(\bF,R)=\M_d(R)$.

\begin{thm}\label{thm-her} Let $\La$ be an
$R$-order in $\M_d(K)$. $\La$ is maximal if and only if there is an
invertible element $u\in \M_d(K)$ such that $u\La u^{-1}=\M_d(R)$;
$\La$ is hereditary if and only if $u\La u^{-1}=\M_d(\bF,R)$ for
some $\bF$ (uniquely determined up to permutation of its entries).
\end{thm}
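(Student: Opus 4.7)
The plan is to identify $R$-orders in $\M_d(K) = \End_K(V)$, where $V = K^d$, with stabilizers of (chains of) full $R$-lattices in $V$; the classification then reduces to the linear algebra of lattices over the complete discrete valuation ring $R$.

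For the maximal case, I would first note that for any full $R$-lattice $L \subset V$ the ring $\End_R(L)$ is an $R$-order in $\End_K(V) = \M_d(K)$; since $R$ is a PID, $L$ is free of rank $d$, so a choice of basis yields $u \in \GL_d(K)$ with $u \End_R(L) u^{-1} = \M_d(R)$. Conversely, for any $R$-order $\La$ and any full $R$-lattice $L_0 \subset V$, the module $L := \La \cdot L_0$ is a $\La$-stable full $R$-lattice (finitely generated because $\La$ is a finitely generated $R$-module, and spanning $V$ over $K$), so $\La \subseteq \End_R(L)$. Thus every $R$-order is contained in one of the form $\End_R(L)$, and the latter is conjugate to $\M_d(R)$. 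A direct computation shows $\M_d(R)$ is maximal (its unique maximal two-sided ideal is $\pi \M_d(R)$), so if $\La$ is maximal then $\La = \End_R(L)$, which settles the first statement.

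For the hereditary case, the strategy is to associate to $\La$ a cyclic chain of $\La$-stable full lattices
$$
L_0 \supsetneq L_1 \supsetneq \cdots \supsetneq L_{r-1} \supsetneq L_r = \pi L_0,
$$
and to show $\La = \bigcap_{i=0}^{r-1} \End_R(L_i)$. The existence of the chain rests on the standard structure theory of hereditary orders over a complete DVR: the Jacobson radical $J(\La)$ is an invertible two-sided ideal satisfying $J(\La)^r = \pi \La$ for some $r \geq 1$, and the semisimple quotient $\La / J(\La)$ is isomorphic to a product $\prod_{i=0}^{r-1} \M_{f_i}(R/\pi R)$ of matrix algebras over the residue field. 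Lifting a complete set of central primitive idempotents from $\La / J(\La)$ to $\La$ and cutting $V$ by the corresponding projectors produces the chain, with $f_i = \dim_{R/\pi R}(L_i / L_{i+1})$ and $\sum f_i = d$. Choosing a basis of $L_0$ adapted to this filtration then realizes $\La$ as $\M_d(\bF, R)$ for $\bF = (f_0, \ldots, f_{r-1}, 0, \ldots, 0)$. Conversely, $\M_d(\bF, R)$ is directly seen to be the intersection of the maximal orders $\End_R(L_i)$ attached to an explicit lattice chain, and a short Jacobson-radical computation verifies that such an intersection is hereditary.

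Uniqueness of $\bF$ up to permutation reflects the fact that the multiset $\{f_0, \ldots, f_{r-1}\}$ records the dimensions of the successive quotients of the canonical lattice chain, which is determined by $\La$ up to the choice of the initial lattice $L_0$ in the cycle. The main obstacle I anticipate is precisely the structural input in the hereditary direction: proving that the Jacobson radical of a hereditary order over a complete DVR is invertible with $J(\La)^r = \pi \La$, and that the semisimple quotient has the product form above, is the technical heart of the classification. Once this is granted, the passage to the explicit normal form $\M_d(\bF, R)$ is routine, and the maximal case is recovered as the special case $r = 1$, $\bF = (d, 0, \ldots, 0)$.
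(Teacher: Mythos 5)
The paper gives no proof of its own here; it simply cites Reiner, Theorems 17.3 and 39.14, which are precisely the classification of maximal and hereditary orders in $\M_d(K)$ over a (complete) DVR. Your outline is a faithful sketch of those proofs: the maximal case via $\La \subseteq \End_R(\La\cdot L_0)$ together with maximality of $\M_d(R)$, and the hereditary case via invertibility of the Jacobson radical, $J(\La)^r=\pi\La$, the split semisimple quotient, and the resulting cyclic lattice chain, which is exactly the technical content of Reiner's Theorem 39.14. One small imprecision worth flagging: the parenthetical claim that $\M_d(R)$ is maximal ``because its unique maximal two-sided ideal is $\pi\M_d(R)$'' is not by itself a proof of maximality as an order (a ring can have a unique maximal two-sided ideal yet still be properly contained in a larger order); the standard argument instead uses the matrix units $e_{ij}\in\M_d(R)$ to show that any order $\Gamma\supseteq\M_d(R)$ satisfies $e_{ii}\Gamma e_{jj}=R\,e_{ij}$, forcing $\Gamma=\M_d(R)$. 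With that correction, your approach coincides with the cited source.
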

\begin{proof}
See Theorems 17.3 and 39.14 in \cite{Reiner}.
\end{proof}

When $\La$ is a hereditary order as in Theorem \ref{thm-her}, we
shall call $\bF$ the \textit{type} of $\La$. (This is slightly
different from the terminology used in \cite[p. 360]{Reiner}.)

\vspace{0.1in}

Let $A$ be a central simple algebra over $F$. An
\textit{$\cO_X$-order in $A$} is a coherent locally  free sheaf
$\cA$ of $\cO_X$-algebras with generic fibre $A$. The $\cO_X$-order
$\cA$ is \textit{maximal} (resp. \textit{hereditary}) if for every
open affine $U=\Spec(R)\subset X$ the set of sections $\cA(U):=\G(U,
\cA)$ is a maximal (resp. hereditary) $R$-order in $A$. For $x\in
|X|$ we denote $A_x:=A\otimes_F F_x$ and
$\cA_x:=\cA\otimes_{\cO_{X,x}}\cO_x$, so $\cA_x$ is isomorphic to a
subring of $A_x$. $\cA$ is maximal (resp. hereditary) if and only if
$\cA_x$ is a maximal (resp. hereditary) $\cO_x$-order in $A_x$.

Let $\cA$ be a hereditary $\cO_X$-order. Let $\cI$ be a coherent
sheaf on $X$ which is a locally free rank-$1$ right $\cA$-module.
(The action of $\cA$ on $\cI$ extends the action of $\cO_X$.) The
generic fibre $\cI\otimes_{\cO_X}F$ is isomorphic to $A$ as an
$F$-vector space. Define a sheaf $O_\ell(\cI)$ on $X$ as follows.
For an open affine $U\subset X$ let
$$
O_\ell(\cI)(U)=O_\ell(\cI(U)).
$$
It is easy to see that $O_\ell(\cI)$ is an $\cO_X$-order in $A$,
which is locally isomorphic to $\cA$.

\section{Dieudonn\'e modules}\label{sec1}

Let $R$ be a complete discrete valuation ring of positive
characteristic $p>0$ and residue field $\F_q$. Fix a uniformizer
$\pi$ of $R$ and identify $R=\F_q\comr{\pi}$. Let $K$ be the
fraction field of $R$. Let $\cR=R\ctimes_{\F_q}k\cong k\comr{\pi}$
be the completion of the maximal unramified extension of $R$, and
$\cK=K\ctimes_{\F_q}k\cong k\comf{\pi}$ be the field of fractions of
$\cR$. We will denote the canonical lifting of $\Fr_q\in
\Gal(k/\F_q)$ to $\Aut(\cK)$ by the same symbol, so
$$
\Fr_q\left(\sum_{i=n}^\infty a_i \pi^i\right)=\sum_{i=n}^\infty
a_i^q \pi^i,\quad n\in \Z.
$$

The following definition and Theorem \ref{simpleN} below are due to
Drinfeld \cite{DrinfeldPC}; see also \cite[$\S$2.4]{Laumon}.

\begin{defn}
A \textit{Dieudonn\'e $R$-module over $k$} is a free $\cR$-module of
finite rank $M$ endowed with an injective $\Fr_q$-linear map
$\varphi: M\to M$ such that the cokernel of $\varphi$ is finite
dimensional as a $k$-vector space. The \textit{rank} of
$(M,\varphi)$ is the rank of $M$ as a $\cR$-module. A
\textit{Dieudonn\'e $K$-module over $k$} is a finite dimensional
$\cK$-vector space $N$ endowed with a bijective $\Fr_q$-linear map
$\varphi: N\to N$. The \textit{rank} of $(N,\varphi)$ is the
dimension of $N$ as a $\cK$-vector space. A \textit{morphism} of
Dieudonn\'e $R$-modules (resp. $K$-modules) over $k$ is a linear map
between the underlying $\cR$-modules (resp. $\cK$-vector spaces)
which commutes with the $\Fr_q$-linear maps $\varphi$. If
$(M,\varphi)$ is a Dieudonn\'e $R$-module over $k$, then
$(K\otimes_R M, K\otimes_R \varphi)$ is a Dieudonn\'e $K$-module
over $k$.
\end{defn}

Let $\cK\{\tau\}$ be the non-commutative polynomial ring with
commutation rule $\tau\cdot a=\Fr_q(a)\tau$, $a\in \cK$. For each
pair of integers $(r,s)$ with $r\geq 1$ and $(r,s)=1$, let
$$
N_{r,s}=\cK\{\tau\}/\cK\{\tau\}(\tau^r-\pi^s).
$$
Then $(N_{r,s}, \varphi_{r,s})$, where $\varphi_{r,s}$ is the left
multiplication by $\tau$, is a Dieudonn\'e $K$-module over $k$ of
rank $r$.

\begin{thm}\label{simpleN}
The category of Dieudonn\'e $K$-modules over $k$ is $K$-linear and
semi-simple. Its simple objects are $(N_{r,s},\varphi_{r,s})$,
$r,s\in \Z$, $r\geq 1$, $(r,s)=1$. The $K$-algebra of endomorphisms
$D_{r,s}=\End(N_{r,s},\varphi_{r,s})$ of such an object is the
central division algebra over $K$ with invariant $-s/r$.
\end{thm}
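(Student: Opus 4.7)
The plan is to carry out the classical Dieudonn\'e--Manin classification in the present equal-characteristic setting. $K$-linearity of the category is automatic, since $\Fr_q$ acts trivially on $K\subset \cK$, so every $\cK$-linear map commuting with $\varphi$ is in particular $K$-linear.

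The structural backbone of the proof is that the twisted polynomial ring $\cK\{\tau\}$ is a left and right principal ideal domain, the two-sided Euclidean algorithm being provided by the $\tau$-degree. A Dieudonn\'e $K$-module $(N,\varphi)$ becomes a left $\cK\{\tau\}$-module via $\tau\cdot n = \varphi(n)$, and finite-dimensionality over $\cK$ forces $N$ to be a finitely generated torsion module; it therefore decomposes as a finite direct sum of cyclic modules $\cK\{\tau\}/\cK\{\tau\}f$. Applying a non-commutative Newton polygon argument to each elementary divisor $f$, one refines it into factors whose polygon has a single slope $s/r$ with $(r,s)=1$; any such slope-pure cyclic piece of $\cK$-rank $r$ has the form $\cK\{\tau\}/\cK\{\tau\}(\tau^r-c\pi^s)$ for some $c\in\cR^\times$. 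Lang's theorem applied to the automorphism $\Fr_q^r$ of $\cR$ produces $\alpha\in\cR^\times$ with $\Fr_q^r(\alpha)/\alpha=c$, and conjugation by $\alpha$ normalizes the cyclic module to $N_{r,s}$; slope-$s/r$ pieces of higher $\cK$-rank split further as direct sums of copies of $N_{r,s}$.

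It remains to prove that $N_{r,s}$ is simple with endomorphism algebra the asserted central division $K$-algebra $D_{r,s}$ of invariant $-s/r$. By the standard description of endomorphisms of a cyclic left module,
$$
\End(N_{r,s})\cong \bigl\{g\in\cK\{\tau\}\mid (\tau^r-\pi^s)\,g\in\cK\{\tau\}(\tau^r-\pi^s)\bigr\}\big/\cK\{\tau\}(\tau^r-\pi^s),
$$
with the action coming from right multiplication. A direct computation identifies this with the $K$-subalgebra generated by the class of $\tau$ together with the fixed field $L:=\cK^{\Fr_q^r}\cong \F_{q^r}\comf{\pi}$, the unramified extension of $K$ of degree $r$, subject to $\tau^r=\pi^s$ and $\tau\ell\tau^{-1}=\Fr_q(\ell)$ for $\ell\in L$. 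This is the cyclic algebra $(L/K,\Fr_q,\pi^s)$, a central simple $K$-algebra of dimension $r^2$ whose local Hasse invariant is $-s/r$ in the sign convention of the theorem; in particular $D_{r,s}$ is a division algebra. Schur's lemma then forces $N_{r,s}$ to be simple, since any sub-Dieudonn\'e module is automatically $\End(N_{r,s})$-stable and hence a subspace of the one-dimensional $D_{r,s}$-vector space $N_{r,s}$.

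Semi-simplicity and uniqueness of the decomposition follow formally: the $N_{r,s}$ for distinct pairs $(r,s)$ are pairwise non-isomorphic because their slopes $s/r$ differ, and every module has been shown to split into such simples. The main obstacles are the non-commutative Newton polygon decomposition inside $\cK\{\tau\}$, which does not reduce to the commutative analogue and demands careful bookkeeping with the $\Fr_q$-twist, and pinning down the sign convention so that the invariant emerges as $-s/r$ rather than $+s/r$; both steps are classical but error-prone.
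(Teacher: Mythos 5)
The paper does not prove this theorem: it is stated with the attribution ``due to Drinfeld \cite{DrinfeldPC}; see also \cite[$\S$2.4]{Laumon}'' and no argument is given. So there is no in-paper proof to compare against; your sketch has to be judged on its own. The overall route you take --- $\cK\{\tau\}$ is a noncommutative PID, decompose $N$ into cyclic modules, refine by Newton polygon into isoclinic pieces, use solvability of $\Fr_q^r(\alpha)=c\alpha$ in $\cR^\times$ (the $k$-algebraically-closed analogue of Lang's theorem), and then compute $\End(N_{r,s})$ as the quotient eigenring of $\tau^r-\pi^s$ --- is the standard Dieudonn\'e--Manin argument in equal characteristic and is the right skeleton.

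There are, however, two real gaps. First, the simplicity argument does not work as written. You claim that any sub-Dieudonn\'e module of $N_{r,s}$ is automatically $\End(N_{r,s})$-stable; this is false in general (a $\varphi$-stable $\cK$-subspace need not be carried into itself by an arbitrary endomorphism of $N_{r,s}$), and the phrase ``one-dimensional $D_{r,s}$-vector space'' does not parse since $\dim_K N_{r,s}$ is infinite while $\dim_K D_{r,s}=r^2$; if you mean $N_{r,s}$ is the simple $\cK\otimes_K D_{r,s}\cong\M_r(\cK)$-module, say so. The clean argument is that $\tau^r-\pi^s$ is irreducible in $\cK\{\tau\}$ when $(r,s)=1$: by the multiplicativity of Newton polygons that you already invoke, a nontrivial factorization would split the single slope-$s/r$ segment of horizontal length $r$ and vertical drop $s$ into shorter lattice segments of the same slope, which is impossible for coprime $(r,s)$. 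Simplicity of the cyclic module then follows because $\cK\{\tau\}$ is a PID.

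Second, the passage from the eigenring to $\End(N_{r,s})$ glosses over the opposite-algebra flip. What you compute, generated by $L=\cK^{\Fr_q^r}$ and $\tau$ with $\tau\ell\tau^{-1}=\Fr_q(\ell)$ and $\tau^r=\pi^s$, is the eigenring $\{g:(\tau^r-\pi^s)g\in\cK\{\tau\}(\tau^r-\pi^s)\}/\cK\{\tau\}(\tau^r-\pi^s)$, which is $(L/K,\Fr_q,\pi^s)$ of invariant $+s/r$. Since $\End(N_{r,s})$ acts by \emph{right} multiplication on the cyclic left module, it is the opposite of this eigenring, i.e.\ $(L/K,\Fr_q^{-1},\pi^s)$, and \emph{that} has invariant $-s/r$. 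You state the final answer $-s/r$ but present the relations of the eigenring; as written the two halves of the sentence contradict one another. Making the opp explicit would both fix the sign derivation and clear up the confusion.
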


This theorem implies that given a Dieudonn\'e $K$-module
$(N,\varphi)$, its endomorphism algebra $\End(N,\varphi)$ is a
finite dimensional semi-simple $K$-algebra such that the center of
each simple component is $K$. It is clear that for a Dieudonn\'e
$R$-module $(M,\varphi)$, the endomorphism ring $\End(M,\varphi)$ is
an $R$-order in $\End(N,\varphi)$, where $(N,\varphi)=K\otimes
(M,\varphi)$.

\begin{prop}\label{lemEtale} Let $(M,\varphi)$ be a Dieudonn\'e
$R$-module over $k$ of rank $n$. Suppose $\varphi(M)=M$. Then
$$
(M,\varphi)\cong (R^n\ctimes_{\F_q}k, \Id\ctimes_{\F_q}\Fr_q).
$$
\end{prop}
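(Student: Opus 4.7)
The plan is to produce a $\varphi$-fixed $\cR$-basis of $M$, from which the desired isomorphism follows tautologically. Pick any $\cR$-basis $e_1,\dots,e_n$ of $M$ and write $\varphi(e_j) = \sum_k a_{kj} e_k$; the hypothesis $\varphi(M) = M$ forces the matrix $A = (a_{kj})$ to lie in $\GL_n(\cR)$. A change of basis $f_i = \sum_j b_{ji} e_j$ with $B = (b_{ji}) \in \GL_n(\cR)$ yields $\varphi(f_i) = f_i$ precisely when $A \cdot \Fr_q(B) = B$, i.e.\ $A = B \cdot \Fr_q(B)^{-1}$. Thus the proposition reduces to the claim that every element of $\GL_n(\cR)$ has the form $B \Fr_q(B)^{-1}$ for some $B \in \GL_n(\cR)$: the $\cR$-linear map $R^n \ctimes_{\F_q} k \to M$ sending the standard basis of $R^n$ to $(f_i)$ will then intertwine $\Id \ctimes \Fr_q$ with $\varphi$.

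I would solve this matrix equation by successive approximation modulo powers of $\pi$. Reducing modulo $\pi$ turns $\cR$ into $k$, and the equation $\bar A = \bar B_0 \Fr_q(\bar B_0)^{-1}$ in $\GL_n(k)$ is solvable by Lang's theorem applied to the connected algebraic group $\GL_n$ over $\F_q$, using that $k = \overline{\F}_q$ is algebraically closed. Lift any such $\bar B_0$ to $B_0 \in \GL_n(\cR)$ and set $A_1 = B_0^{-1} A \Fr_q(B_0)$, so that $A_1 \equiv \Id \pmod{\pi}$.

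For the inductive step, assume $A_m \equiv \Id \pmod{\pi^m}$ for some $m \geq 1$ and write $A_m = \Id + \pi^m C_m$. Looking for a correction $B_m = \Id + \pi^m D_m$, a direct expansion gives
$$
B_m^{-1} A_m \Fr_q(B_m) \equiv \Id + \pi^m\bigl(C_m - D_m + \Fr_q(D_m)\bigr) \pmod{\pi^{2m}},
$$
so it suffices to find $\bar D_m \in \M_n(k)$ with $\bar D_m - \Fr_q(\bar D_m) = \bar C_m$. Entrywise this is the additive Artin--Schreier equation $d - d^q = c$, which has a solution in $k$ for every $c$ because $k$ is algebraically closed. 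Setting $A_{m+1} = B_m^{-1} A_m \Fr_q(B_m)$ gives $A_{m+1} \equiv \Id \pmod{\pi^{m+1}}$ and continues the induction.

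Since each $B_m$ with $m \geq 1$ is congruent to $\Id$ modulo $\pi^m$, the infinite product $B = B_0 B_1 B_2 \cdots$ converges in $\GL_n(\cR)$ in the $\pi$-adic topology and satisfies $B^{-1} A \Fr_q(B) = \Id$, which is what was required. The only genuine inputs are the two Lang-type surjectivity statements --- multiplicative on $\GL_n(k)$ and additive on $k$ --- and both are automatic from $k$ being algebraically closed, so there is no real obstacle; the rest of the argument is routine bookkeeping of the $\pi$-adic approximation.
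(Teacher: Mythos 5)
Your proof is correct and takes a genuinely different route from the paper's. The paper's argument passes to the associated Dieudonn\'e $K$-module $(N,\varphi)$, invokes the classification of isocrystals (Laumon, Prop.~2.4.6) to conclude $(N,\varphi)\cong(N_{1,0},\varphi_{1,0})^n$, hence $N=N^\varphi\ctimes_{\F_q}k$ with $N^\varphi$ an $n$-dimensional $K$-vector space, and then descends the lattice via $M^\varphi=M\cap N^\varphi$ so that $M=M^\varphi\ctimes_{\F_q}k$. You instead work directly on the integral level: writing $\varphi$ in a basis reduces the claim to surjectivity of the Lang map $B\mapsto B\,\Fr_q(B)^{-1}$ on $\GL_n(\cR)$, which you establish by Lang's theorem for $\GL_n$ over $\F_q$ modulo $\pi$ followed by $\pi$-adic successive approximation, each refinement step being an Artin--Schreier equation (i.e.\ Lang for $\mathbb{G}_a$) over $k$. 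Your computation of the matrix equation and the congruence
$B_m^{-1}A_m\Fr_q(B_m)\equiv \Id+\pi^m\bigl(C_m-D_m+\Fr_q(D_m)\bigr)\pmod{\pi^{2m}}$
is correct, and the convergence of $B_0B_1B_2\cdots$ in $\GL_n(\cR)$ is immediate since $B_m\equiv\Id\pmod{\pi^m}$ for $m\geq1$. What you gain is a self-contained argument avoiding the isocrystal classification entirely, at the cost of the explicit bookkeeping; what the paper's route gains is brevity given that machinery, and it makes transparent that the $\varphi$-fixed sublattice $M^\varphi$ carries all the information. Your version is in effect a proof of ``Lang's theorem for $\GL_n$ over $\cR$,'' which is the content underlying the cited Prop.~2.5 of Drinfeld as well.
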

\begin{proof} This is proven in \cite[Prop. 2.5]{DrinfeldPC}
using $\pi$-divisible groups. An alternative argument is as follows.
Let $(N,\varphi)$ be the associated Dieudonn\'e $K$-module. By
\cite[Prop. 2.4.6]{Laumon}, the assumption of the proposition is
equivalent to $$(N,\varphi)\cong (N_{1,0},\varphi_{1,0})^n.
$$
Hence $$(N,\varphi)\cong(N^\varphi\ctimes_{\F_q}k,
\Id\ctimes_{\F_q}\Fr_q),$$ where $N^\varphi=\{a\in N\ |\
\varphi(a)=a\}$ is an $n$-dimensional $K$-vector space. Since
$N=M\otimes K$, $M^\varphi:=M\cap N^\varphi$ is a full $R$-lattice
in $N^\varphi$ and $M=M^\varphi\ctimes_{\F_q} k$.
\end{proof}

Let $D$ be the $d^2$-dimensional central division algebra over $K$
with invariant $1/d$. Let $\cD$ be the maximal $R$-order in $D$.
Denote by $R_d=\F_{q^d}\comr{\pi}$ the ring of integers of the
degree $d$ unramified extension of $K$. We can identify $\cD$ with
the $R$-algebra $R_d\comr{\Pi}$ of non-commutative formal power
series in the indeterminate $\Pi$ satisfying the relations
\begin{align*}
\Pi a &=\Fr_q(a) \Pi\quad \text{for any } a\in R_d, \\
\Pi^d &=\pi.
\end{align*}

\begin{defn} A Dieudonn\'e $R$-module $(M,\varphi)$ over $k$ is
\textit{connected} if for all large enough positive integers $m$,
$\varphi^m(M)\subset \pi M$. This is equivalent to saying that
$(N_{1,0}, \varphi_{1,0})$ does not appear in the decomposition of
the associated Dieudonn\'e $K$-module into simple factors. A
\textit{Dieudonn\'e $\cD$-module over $k$} is a rank-$d^2$ connected
Dieudonn\'e $R$-module $(M,\varphi)$ over $k$ equipped with a right
$\cD$-action which commutes with $\varphi$ and extends the natural
action of $R$. A \textit{morphism} of Dieudonn\'e $\cD$-modules is a
morphism of the underlying Dieudonn\'e $R$-modules which commutes
with the action of $\cD$. For each Dieudonn\'e $\cD$-module
$(M,\varphi)$ over $k$ there is an associated Dieudonn\'e $D$-module
$(N,\varphi)=K\otimes(M,\varphi)$.
\end{defn}

A Dieudonn\'e $\cD$-module $(M,\varphi)$ over $k$ is naturally a
right $\cD\ctimes_{\F_q} k$-module. Fix an embedding
$\F_{q^d}\hookrightarrow k$. Since $\F_{q^d}$ is also embedded in
$\cD$, we obtain a grading
$$
M=\bigoplus_{i\in \Z/d\Z} M_i,
$$
where $M_i=\{m\in M\ |\ m(\la\ctimes 1)=m(1\ctimes \la^{q^i}),
\la\in \F_{q^d} \}$. Each $M_i$ is a free finite rank $\cR$-module.
The action of $\Pi\ctimes 1$ on $M$ induces injective linear maps
$\Pi_i:M_i\to M_{i+1}$, $i\in \Z/d\Z$. The composition
$$
M_i\xrightarrow{\Pi_i}M_{i+1}\xrightarrow{\Pi_{i+1}}\cdots
\xrightarrow{\Pi_{i+d-1}}M_{i+d}=M_i
$$
is $M_i(\pi\ctimes 1)=\pi M_i$. In particular, all $M_i$ have the
same rank over $\cR$, which must be $d$, since the rank of $M$ is
$d^2$. Since $\dim_k(\coker(\pi))=d^2$, we have
$\dim_k(\coker(\Pi))=d$.

Similarly, $\varphi$ induces injective $\Fr_q$-linear maps
$$
M_i\xrightarrow{\varphi_i}M_{i+1}\xrightarrow{\varphi_{i+1}}\cdots
\xrightarrow{\varphi_{i+d-1}}M_{i+d}=M_i.
$$
Let $f_i:=\dim_k(\coker(\varphi_i))$, so $\sum_{i=0}^{d-1}
f_i=\dim_{k}(\coker(\varphi))$. We call the ordered $d$-tuple
$\bF=(f_0,\dots, f_{d-1})$ the \textit{type} of $M$.

\begin{defn}\label{defnExp} (cf. \cite{Ribet}, \cite{Genestier})
A Dieudonn\'e $\cD$-module $(M,\varphi)$ over $k$ is
\textit{exceptional} if $\im(\varphi)=\im(\Pi)$. $(M,\varphi)$ is
\textit{special} if $f_i=1$ for all $i$. $(M,\varphi)$ is
\textit{superspecial} if it is special and exceptional.
\end{defn}

Note that $(M,\varphi)$ being exceptional is equivalent to
$\im(\varphi_i)=\im(\Pi_i)$ for all $i\in \Z/d\Z$, i.e., every index
of $M$ is \textit{critical} in the terminology of \cite[Def.
II.1.3]{Genestier}. In particular, if $(M,\varphi)$ is exceptional
of type $\bF$ then $\sum_{i=0}^{d-1} f_i=d$.

\begin{prop}\label{propEndM}
Let $(M,\varphi)$ be an exceptional Dieudonn\'e $\cD$-module over
$k$ of type $\bF$. Then $\End_\cD(M,\varphi)\cong \M_d(\bF,R)$. In
particular, $\End_\cD(M,\varphi)$ is a hereditary $R$-order in
$\End_D(N,\varphi)\cong \M_d(K)$.
\end{prop}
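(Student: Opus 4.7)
The plan is to decompose $M$ using the $\F_{q^d}$-grading built into $\cD$ and then exploit exceptionality to extract a lattice chain whose stabilizer is the desired hereditary order. For each $i \in \Z/d\Z$, the hypothesis $\im(\varphi_i) = \im(\Pi_i)$ together with the injectivity of $\Pi_i$ produces a unique $\Fr_q$-linear map $u_i \colon M_i \to M_i$ with $\varphi_i = \Pi_i \circ u_i$. This $u_i$ is injective (because $\varphi_i$ is) and surjective (because $\Pi_i(u_i(M_i)) = \varphi_i(M_i) = \Pi_i(M_i)$ and $\Pi_i$ is injective). Applying Proposition~\ref{lemEtale} to the rank-$d$ pair $(M_i, u_i)$ produces a free $R$-submodule $L_i := M_i^{u_i}$ of rank $d$ with $M_i = L_i \ctimes_{\F_q} k$.

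Next, the relation $\varphi \Pi = \Pi \varphi$ reads on graded pieces as $\varphi_{i+1}\Pi_i = \Pi_{i+1}\varphi_i$; substituting $\varphi_j = \Pi_j u_j$ and cancelling the injective $\Pi_{i+1}$ yields $u_{i+1}\Pi_i = \Pi_i u_i$. Hence $\Pi_i(L_i) \subseteq L_{i+1}$, and $\Pi_i$ restricts to an $R$-linear injection $L_i \to L_{i+1}$ whose cokernel has length $f_i$ (after $\ctimes_{\F_q} k$ it recovers $\coker(\varphi_i)$, of $k$-dimension $f_i$). Any endomorphism $f \in \End_\cD(M,\varphi)$ preserves the grading and decomposes as $(f_i)$; combining $f_{i+1}\Pi_i = \Pi_i f_i$ with $f_{i+1}\varphi_i = \varphi_i f_i$ and the identity $\varphi_i = \Pi_i u_i$ forces $f_i u_i = u_i f_i$. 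This commutation says exactly that $f_i$ is the $\cR$-linear extension of a unique $R$-linear endomorphism $g_i \colon L_i \to L_i$, and the only remaining constraint is $g_{i+1}\Pi_i = \Pi_i g_i$.

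Finally, set $V := L_0 \otimes_R K$. Over $K$ every $\Pi_i$ is bijective, so iterating $\Pi_0, \ldots, \Pi_{i-1}$ identifies each $L_i \otimes K$ with $V$, turning the lattices $L_i$ into sublattices $L_i' := (\Pi_{i-1}\cdots \Pi_0)^{-1}(L_i) \subseteq V$ that fit into a chain
\[
L_0' \subseteq L_1' \subseteq \cdots \subseteq L_{d-1}' \subseteq L_d' = \pi^{-1}L_0',
\]
where the endpoint identity uses $\Pi_{d-1}\cdots \Pi_0 = \pi \cdot \Id$, and $L_{i+1}'/L_i'$ has $R$-length $f_i$. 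The relation $g_{i+1}\Pi_i = \Pi_i g_i$ means precisely that the collection $(g_i)$ comes from a single element $h \in \End_K(V) \cong \M_d(K)$, and the conditions $g_i(L_i) \subseteq L_i$ translate to $h(L_i') \subseteq L_i'$ for every $i$. Therefore $\End_\cD(M,\varphi)$ is isomorphic to the stabilizer in $\End_K(V)$ of the chain $(L_i')$, and Theorem~\ref{thm-her} identifies this stabilizer with the hereditary $R$-order $\M_d(\bF, R)$.

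The principal delicate point is the cyclic bookkeeping in the last step: tracking the shift around the cycle of $\Pi_i$'s so that $L_d' = \pi^{-1}L_0'$, and confirming that the ordered lengths $(f_0, \ldots, f_{d-1})$ of the successive quotients recover the type $\bF$. Once this is in hand, Theorem~\ref{thm-her} delivers both the hereditary property and the explicit isomorphism with $\M_d(\bF,R)$ at once.
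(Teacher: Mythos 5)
Your argument is correct and follows essentially the same route as the paper's own proof: the map $u_i$ you construct is exactly the paper's $\Pi_i^{-1}\circ\varphi_i$, applying Proposition~\ref{lemEtale} to the pairs $(M_i, u_i)$ reproduces the paper's lattices $\La_i$, and passing to the stabilizer of the resulting chain in a fixed $K^d$ is precisely the paper's reduction to Theorem~\ref{thm-her}. You spell out the transfer to a single vector space $V$ and the compatibility $u_{i+1}\Pi_i = \Pi_i u_i$ in more detail than the paper does, but the underlying ideas coincide.
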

\begin{proof}
Using the injections $\Pi_i$, we can identify all $M_i\otimes K$
with the same $d$-dimensional $\cK$-vector space $V$. Then $\Pi_i$
induces a bijective linear map $V\to V$, and $\varphi_i$ induces a
bijective $\Fr_q$-linear map. Consider $\Pi_i^{-1}\circ
\varphi_i:V\to V$ as a bijective $\Fr_q$-linear map. Since
$(M,\varphi)$ is exceptional, $\im(\Pi_i)=\im(\varphi_i)$ for all
$i\in \Z/d\Z$. Hence $\Pi_i^{-1}\circ \varphi_i$ is bijective on
$M_i$. By Proposition \ref{lemEtale}, there are $d$ full
$R$-lattices $\La_i$ in $K^d$, $i\in \Z/d\Z$, such that
$M_i=\La_i\ctimes k$. Since the action of $\cD$ commutes with the
action of $\varphi$, we have $\varphi_{i+1}\circ
\Pi_i=\Pi_{i+1}\circ \varphi_i$. This implies that the
identifications $M_i=\La_i\ctimes k$ can be made compatibly so that
\begin{equation}\label{eq-flag}
\La_0 \xrightarrow{\pi_0} \La_1 \xrightarrow{\pi_1} \cdots
\xrightarrow{\pi_{d-2}} \La_{d-1}\xrightarrow{\pi_{d-1}}  \La_0,
\end{equation}
where $\pi_i$'s are injections, $\Pi_i=\pi_i\ctimes k$,
$\varphi_i=\pi_i\ctimes \Fr_q$. The inclusions $\pi_i$ satisfy
$$
\pi_i\circ\pi_{i+1}\circ\cdots\circ \pi_{i+d-1}=\pi,
$$
so each
$\coker(\pi_i)$ has no nilpotents and
$\dim_{\F_q}(\coker(\pi_i))=f_i$.

Now it is easy to see that giving an endomorphism of $(M,\varphi)$
commuting with the action of $\cD$ is equivalent to giving an
endomorphism $g$ of $K^d$ which preserves the flag of lattices
(\ref{eq-flag}). Such endomorphisms form an $R$-algebra isomorphic
to $\M_d(\bF,R)$. That this is a hereditary order in $\M_d(K)$
follows from Theorem \ref{thm-her}.
\end{proof}

Every Dieudonn\'e $\cD$-module over $k$ satisfies the properties in
\cite[p. 20]{Genestier}, hence corresponds to a formal $\cD$-module
of height $d^2$. It is instructive to give explicit examples of such
formal modules. What follows below is motivated by
\cite[I.4.2]{Genestier}.

The underlying formal group is isomorphic to
$\widehat{\mathbb{G}}_{a,k}^d$, where
$\widehat{\mathbb{G}}_{a,k}=\Spf(k\comr{t})$ is the formal additive
group. Denote by $\tau$ the Frobenius isogeny of
$\widehat{\mathbb{G}}_{a,k}$ corresponding to $t\mapsto t^q$. To
give a formal $\cD$-module essentially amounts to giving an
embedding
$$\Phi: \F_{q^d}\comr{\Pi}=\cD \hookrightarrow \End(\mathbb{G}_{a,k}^d)\cong \M_d(k\{\!\{\tau\}\!\}),$$
where $k\{\!\{\tau\}\!\}$ is the non-commutative ring of formal
power series in $\tau$ satisfying $\tau a=a^q\tau$ for $a\in k$.

Now let $(M,\varphi)$ be an exceptional Dieudonn\'e $\cD$-module of
type $\bF$. Being exceptional, i.e., $\im(\Pi)=\im(\varphi)$,
translates into $$\Phi(\Pi)=\tau\cdot \Id.$$ The type translates
into
$$\Phi(\la)=\mathrm{diag}(\chi_{ij}(\la))_{0\leq j\leq d-1, 1\leq i\leq
f_j}, \ \la\in \F_{q^d},$$ where $\chi_{ij}(\la)=\la^{q^j}$ if
$f_j\neq 0$ and is omitted from $\mathrm{diag}(\cdot)$ otherwise.
For example, if $d=3$ and $\bF=(2,0,1)$ then
$$
\Phi(\Pi)=\begin{pmatrix} \tau & 0 & 0 \\ 0 &\tau & 0\\ 0 & 0 & \tau
\end{pmatrix}\quad \text{and}\quad \Phi(\la)=\begin{pmatrix} \la & 0 & 0 \\ 0 &\la & 0\\ 0 & 0 &
\la^{q^2}
\end{pmatrix}.
$$
The endomorphism ring $\End_\cD(M,\varphi)$ is isomorphic to the
opposite algebra of the centralizer of $\Phi(\cD)$ in
$\M_d(k\{\!\{\tau\}\!\})$. One can check as in
\cite[I.4.2]{Genestier} that this centralizer is isomorphic to
$\M_d(\bF,R)^\opp$.

\section{$\cD$-elliptic sheaves}\label{Sec2} In this section we
recall the definition of $\cD$-elliptic sheaves of finite
characteristic and their basic properties as given in \cite[Ch.
9]{LRS}.

Fix a closed point $\infty\in |X|$. Let $D$ be a central simple
algebra over $F$ of dimension $d^2$. Assume $D$ is split at
$\infty$, i.e., $D\otimes_F F_\infty\cong \M_d(F_\infty)$. Fix a
maximal $\cO_X$-order $\cD$ in $D$. Denote by $\Ram\subset |X|$ the
set of places where $D$ is ramified; hence for all $x\not\in \Ram$
the couple $(D_x,\cD_x)$ is isomorphic to $(\M_d(F_x),
\M_d(\cO_x))$. Fix another closed point $o \in |X|-\infty$, and an
embedding $\F_o\hookrightarrow k$. Let $z$ be the morphism
determined by these choices
$$
z:\Spec(k)\to \Spec(\F_o)\hookrightarrow X.
$$

\begin{defn} A \textit{$\cD$-elliptic sheaf of characteristic $o$ over $k$}
is a sequence $\E=(\cE_i,j_i,t_i)_{i\in \Z}$, where $\cE_i$ is a
locally-free $\cO_{X\otimes_{\F_q}k}$-module of rank $d^2$, equipped
with a right action of $\cD$ which extends the $\cO_X$-action, and
\begin{align*}
j_i &:\cE_i\hookrightarrow \cE_{i+1}\\
t_i &:\twist{\cE_i}:=(\Id_X\otimes \Fr_q)^\ast \cE_i\hookrightarrow
\cE_{i+1}
\end{align*}
are injective $\cD$-linear homomorphisms. Moreover, for each $i\in
\Z$ the following conditions hold:
\begin{enumerate}
\item The diagram
$$
\xymatrix{\cE_i \ar[r]^{j_i} & \cE_{i+1}\\ \twist{\cE_{i-1}}
\ar[r]^{\twist{j_{i-1}}}\ar[u]^-{t_{i-1}} &
\twist{\cE_i}\ar[u]_-{t_i}}
$$
commutes;
\item $\cE_{i+d\cdot\deg(\infty)}=\cE_i\otimes_{\cO_X}\cO_X(\infty)$, and the inclusion
$$
\cE_i\xrightarrow{j_i}\cE_{i+1}\xrightarrow{j_{i+1}}\cdots \to
\cE_{i+d\cdot\deg(\infty)}=\cE_i\otimes_{\cO_X}\cO_X(\infty)
$$
is induced by $\cO_X\hookrightarrow \cO_X(\infty)$;
\item $\dim_kH^0(X\otimes k,\coker j_i)=d$;
\item $\cE_i/t_{i-1}(\twist{\cE_{i-1}})=z_\ast\cH_i$, where $\cH_i$
is a $d$-dimensional $k$-vector space.
\end{enumerate}
\end{defn}

When $o\not\in \Ram$, the previous definition is exactly the one
found in \cite[p. 260]{LRS}. When $o\in \Ram$ this definition is not
restrictive enough, but for our purposes it is adequate to take it
as a starting point.

\begin{defn} Let $\mathbf{DES}$ be the category whose objects are
the $\cD$-elliptic sheaves of characteristic $o$ over $k$, and a
morphism between two objects in this category
$$
\psi=(\psi_i)_{i\in \Z}:\E'=(\cE_i', j_i', t_i')_{i\in \Z}\to
\E''=(\cE_i'', j_i'', t_i'')_{i\in \Z}
$$
is a sequence of sheaf morphisms $\psi_i:\cE_i'\to \cE''_i$ which
are compatible with the action of $\cD$ and commute with the
morphisms $j_i$ and $t_i$:
$$
\psi_{i+1}\circ j_i'=j_i''\circ \psi_i \quad \text{and}\quad
\psi_i\circ t_{i-1}'=t_i''\circ \twist{\psi_{i-1}}.
$$
Denote by $\Hom(\E',\E'')$ the set of all morphisms $\E'\to \E''$,
and let $\End(\E)=\Hom(\E,\E)$.
\end{defn}

\begin{defn}[\cite{DrinfeldPC}]
A \textit{$\varphi$-space} over $k$ is a finite dimensional
$F\otimes_{\F_q} k$-vector space $N$ equipped with a bijective
$F\otimes_{\F_q}\Fr_q$-linear map $\varphi:N\to N$. A
\textit{morphism} $\alpha$ between two $\varphi$-spaces
$(N',\varphi')$ and $(N'',\varphi'')$ is a $F\otimes_{\F_q}k$-linear
map $N'\xrightarrow{\alpha}N''$ such that $\varphi''\circ
\alpha=\alpha\circ \varphi'$.
\end{defn}

Let $\E\in \mathbf{DES}$. Denote $N=H^0(\Spec(F\otimes_{\F_q} k),
\cE_0)$. This is a free $D\otimes_{\F_q}k$-module of rank $1$. The
$t_i$'s induce a bijective $F\otimes_{\F_q} \Fr_q$-linear map
$\varphi:N\to N$, compatible with the action of $D$ on the right, so
to $\E$ one can attach a $\varphi$-space over $k$ equipped with an
action of $D$, which commutes with $\varphi$. This action induces an
$F$-algebra homomorphism
$$
\iota: D^\opp\to \End(N,\varphi).
$$
We denote by $\End_D(N,\varphi)$ the $F$-algebra of endomorphisms of
$(N,\varphi)$ which commute with the action of $D$. The triple
$(N,\varphi,\iota)$ is called the \textit{generic fibre of $\E$}
(\cite[Def. 9.2]{LRS}). It is independent of the choice of $\cE_0$
since the sheaves $\cE_i$ are isomorphic over $(X-\infty)\otimes k$
via $j$'s.

For $x\in |X|$, denote $M_x:=H^0(\Spec(\cO_x\ctimes k), \cE_0)$.
This is a free $\cO_x\ctimes_{\F_q} k$-module of rank $d^2$ with a
right action of $\cD_x$. Let $N_x=F_x\otimes_{\cO_x} M_x$. The
$t_i$'s induce a bijective $F_x\ctimes_{\F_q} \Fr_q$-linear map
$\varphi_x: N_x\to N_x$, compatible with the action of $D_x$. The
pair $(N_x,\varphi_x)$ is the \textit{Dieudonn\'e module} of $\E$ at
$x$. The $F_x$-algebra of endomorphisms of $(N_x,\varphi_x)$ which
commute with the action of $D_x$ will be denoted by $\End_{D_x}(N_x,
\varphi_x)$. Note that $(N_x,\varphi_x)=(F_x\ctimes_F N,
F_x\ctimes_F \varphi)$.

As easily follows from definitions, the lattices $M_x$ have the
following properties (see \cite[Lem. 9.3]{LRS}):
\begin{enumerate}
\item[(\textbf{M1})] If $x=\infty$, then
\begin{align*}
M_\infty\subset \varphi_\infty(M_\infty)\\
\dim_k(\varphi_\infty(M_\infty)/M_\infty)=d\\
\varphi_\infty^{d\cdot\deg(\infty)}(M_\infty)=
\pi_\infty^{-1}M_\infty.
\end{align*}
\item[(\textbf{M2})] If $x=o$, then
$$
\pi_oM_o\subset \varphi_o(M_o)\subset M_o
$$
and the $\F_o\otimes_{\F_q} k$-module $M_o/\varphi_o(M_o)$ is of
length $d$ and is supported on the connected component of
$\Spec(\F_o\otimes_{\F_q} k)$ which is the image of $z$.
\item[(\textbf{M3})] If $x\neq o,\infty$, then
$$
\varphi_x(M_x)= M_x.
$$
\item[(\textbf{M4})] Some basis of $N$ generates $M_x$ in $N_x$ for all but finitely many
$x\in |X|$.
\end{enumerate}

\begin{defn} Let $\mathbf{DMod}$ be the category whose objects are
the pairs
$$((N,\varphi,\iota),
(M_x)_{x\in |X|})$$ where $(N,\varphi)$ is a $\varphi$-space of rank
$d^2$ over $F\otimes k$, $\iota:D^\opp \to \End(N,\varphi)$ is an
$F$-algebra homomorphism, and $(M_x)_{x\in |X|}$ is a collection of
$\cD_x$-lattices in $(N_x,\varphi_x)=(F_x\ctimes_F N, F_x\ctimes_F
\varphi)$ which satisfy (\textbf{M1})-(\textbf{M4}). A morphism
$\alpha$ between two such objects $$\alpha:((N',\varphi',\iota'),
(M_x')_{x\in |X|})\to ((N'',\varphi'',\iota''), (M_x'')_{x\in
|X|})
$$
is a morphism of the $\varphi$-spaces $\alpha:(N',\varphi')\to
(N'',\varphi'')$ such that $$\iota''\circ \alpha=\alpha\circ
\iota'\quad \text{ and }\quad \alpha\ctimes F_x (M_x')\subset
M_x''$$ for all $x\in |X|$.
\end{defn}

\begin{prop}\label{prop-latt} The functor $\mathbf{DES}\to
\mathbf{DMod}$ which associates to a $\cD$-elliptic sheaf of
characteristic $o$ over $k$ its generic fibre along with the
lattices $M_x$ in its Dieudonn\'e modules is an equivalence of
categories.
\end{prop}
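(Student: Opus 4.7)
The strategy is to construct an explicit candidate quasi-inverse $G:\mathbf{DMod}\to\mathbf{DES}$ for the functor $F:\mathbf{DES}\to\mathbf{DMod}$ of the statement, and then verify that the two are mutually quasi-inverse. The main external input is the classical local-to-global description of coherent sheaves on the smooth curve $X\otimes_{\F_q}k$: a locally free $\cO_{X\otimes k}$-module of rank $d^2$ carrying a compatible right $\cD$-action is uniquely reconstructed from its generic fibre (a right $D\otimes_{\F_q}k$-module) together with its collection of local $\cD_x\ctimes k$-lattices, subject precisely to condition (\textbf{M4}) which guarantees that all but finitely many of them come from a common generic basis.

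Given $((N,\varphi,\iota),(M_x))$ in $\mathbf{DMod}$, I define $\cE_i$ to be the unique such coherent sheaf with generic fibre $N$, local sections $M_x$ at every $x\neq\infty$, and local sections $\varphi_\infty^i(M_\infty)$ at $\infty$. Condition (\textbf{M1}) implies that $\varphi_\infty$ is a bijection of $N_\infty$ with itself (its $d\cdot\deg(\infty)$-th iterate equals multiplication by $\pi_\infty^{-1}$), so $\varphi_\infty^i(M_\infty)$ is a well-defined lattice for every $i\in\Z$, and the family $\{\varphi_\infty^i(M_\infty)\}_i$ is strictly increasing in $i$. The inclusion $j_i:\cE_i\hookrightarrow\cE_{i+1}$ is taken to be the identity on $N$ and at each $x\neq\infty$, and the chain inclusion at $\infty$. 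The map $t_i:\twist{\cE}_i\to\cE_{i+1}$ is the one induced by the $F\otimes_{\F_q}\Fr_q$-semilinear bijection $\varphi$: it is well-defined at $\infty$ because $\varphi_\infty(\varphi_\infty^i(M_\infty))=\varphi_\infty^{i+1}(M_\infty)$, at $o$ because $\varphi_o(M_o)\subset M_o$ by (\textbf{M2}), and at the remaining $x$ because $\varphi_x(M_x)=M_x$ by (\textbf{M3}).

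The four axioms of a $\cD$-elliptic sheaf now match the four conditions on lattices one-by-one: the commutativity of the square in (1) is automatic from the construction; axiom (2) is exactly the identity $\varphi_\infty^{d\cdot\deg(\infty)}(M_\infty)=\pi_\infty^{-1}M_\infty$; axiom (3) is $\dim_k(\varphi_\infty(M_\infty)/M_\infty)=d$; and axiom (4), which asserts that $\cE_i/t_{i-1}(\twist{\cE}_{i-1})$ equals $z_\ast$ of a $d$-dimensional $k$-vector space, reduces to the statement in (\textbf{M2}) that $M_o/\varphi_o(M_o)$ has length $d$ and is supported on the component of $\Spec(\F_o\otimes_{\F_q} k)$ cut out by the fixed embedding $\F_o\hookrightarrow k$.

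Conversely, given $\E\in\mathbf{DES}$, the maps $j_i$ are isomorphisms away from $\infty$, so all $\cE_i$ have the same local sections as $\cE_0$ at each $x\neq\infty$; a direct calculation using the compatibility between the $j_i$'s and $t_i$'s identifies the local sections of $\cE_i$ at $\infty$ with $\varphi_\infty^i(M_\infty)$. This proves $GF\cong\mathrm{Id}$; the opposite composition $FG\cong\mathrm{Id}$ is built into the construction. On morphisms, a $\cD$-linear morphism of $\cD$-elliptic sheaves is determined by its restriction to $\cE_0$, which in turn corresponds bijectively to a morphism of the associated pair consisting of generic fibre and local lattices. The main obstacle is purely organizational: one must carefully separate the three local regimes $x=\infty$, $x=o$, and $x\notin\{o,\infty\}$, match each to the appropriate clause of (\textbf{M1})--(\textbf{M3}), and use (\textbf{M4}) to ensure that the adelic data actually assembles into a bona fide coherent sheaf rather than a merely formal collection of lattices.
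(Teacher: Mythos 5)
Your proposal is correct and follows essentially the same route as the paper: you reconstruct each $\cE_i$ from the generic fibre together with the local lattices $M_x$ (with $\varphi_\infty^i(M_\infty)$ at $\infty$), invoke (\textbf{M4}) to ensure this adelic data glues to a genuine locally free sheaf, and check that axioms (1)--(4) of a $\cD$-elliptic sheaf match conditions (\textbf{M1})--(\textbf{M3}) place by place. The only difference is that you spell out the case analysis and the two quasi-inverse natural isomorphisms in more detail than the paper does; the underlying argument is the same.
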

\begin{proof}
From the description of a locally free sheaf on a curve through
lattices in its generic fibre, it follows that the functor in
question is fully faithful. Now let $((N,\varphi,\iota), (M_x)_{x\in
|X|})\in \mathbf{DMod}$ and $i\in \Z$. Define a sheaf $\cE_i$ on
$X\otimes_{\F_q}k$ as follows. Let $U\subset X$ be an open affine.
If $\infty\not\in |U|$, then let
$$
\cE_i(U\otimes_{\F_q} k):=\bigcap_{x\in |U|} (N\cap M_x),
$$
where the inner intersections are taken in $N_x$, and the outer in
$N$. If $\infty\in |U|$, let
$$
\cE_i(U\otimes_{\F_q} k):=\left(\bigcap_{x\in |U|-\infty} (N\cap
M_x)\right)\bigcap \left(\varphi_\infty^i(M_\infty)\cap N\right).
$$
Thanks to (\textbf{M4}), $\cE_i$ is a locally-free
$\cO_{X\otimes_{\F_q} k}$-module of rank $d^2$. The inclusions
$\varphi_\infty^i(M_\infty)\subset \varphi_\infty^{i+1}(M_\infty)$
induce inclusions $j_i:\cE_i\hookrightarrow \cE_{i+1}$. The action
of $\varphi$ on $N$ induces homomorphisms $t_i:\twist{\cE_i}\to
\cE_{i+1}$. The action of $D$ on $(N,\varphi)$ and $\cD_x$ on $M_x$,
defines an action of $\cD$ on $\cE_i$ compatible with $j_i$ and
$t_i$. Finally, the conditions (\textbf{M1})-(\textbf{M3}) ensure
that $(\cE_i,j_i,t_i)_{i\in \Z}\in \mathbf{DES}$. Hence our functor
is essentially surjective.
\end{proof}

Let $x\in |X|$ and $r:=[\F_x:\F_q]$. Since $N_x$ is a free
$F_x\ctimes_{\F_q}k$-module, by fixing an embedding $\F_x\to k$, we
obtain two actions of $\F_x$ on $N_x$. These actions induce a
grading
$$
N_x=\bigoplus_{i\in \Z/r\Z} N_{x,i},
$$
where $N_{x,i}=\{a\in N_x\ |\ (\la^{q^i}\ctimes 1)a=(1\ctimes
\la)a,\ \la\in \F_x\}$. Now $\varphi_x$ maps $N_{x,i}$ bijectively
into $N_{x,i+1}$, and $N_{x,0}$ is an $F_x\ctimes_{\F_x}k$-vector
space. Hence $(N_{x,0},\varphi_x^r)$ is a Dieudonn\'e $F_x$-module
over $k$ in the sense of $\S$\ref{sec1}. We can recover
$(N_x,\varphi_x)$ uniquely from $(N_{x,0},\varphi_x^r)$ since as
$F_x\ctimes_{\F_q}k$-vector space
$$
N_x\cong \bigoplus_{i\in \Z/r\Z} N_{x,0}
$$
with the action of $\varphi_x$ given by
$$
(a_0,a_1,\dots, a_{r-1})\mapsto (\varphi_x^r(a_{r-1}), a_0,\dots,
a_{r-2}).
$$
Finally, since the action of $D_x$ commutes with $\varphi_x$,
$(N_{x,0},\varphi_x^r)$ is a Dieudonn\'e $D_x$-module over $k$ and
$\End_{D_x}(N_x,\varphi_x)=\End_{D_x}(N_{x,0},\varphi_{x}^r)$.
Similar argument applies also to the lattices $M_x$ ($x\neq \infty$)
and produces Dieudonn\'e $\cD_x$-modules over $k$.

\section{Endomorphism rings}\label{Sec3} Let $D$ be as in
$\S$\ref{Sec2}. Assume $D$ is a division algebra such that $D_o$ is
the $d^2$-dimensional central division algebra over $F_o$ with
invariant $1/d$. In this case $\cD_o$ is the unique maximal order of
$D_o$ which we identify with $\F_o^{(d)}\comr{\Pi_o}$. Here
$\F_o^{(d)}$ is the degree $d$ extension of $\F_o$ and
\begin{align*}
\Pi_o a &=\Fr_q^{\deg(o)}(a)\Pi_o,\\
\Pi_o^d &=\pi_o.
\end{align*}

\begin{defn}\label{def3.1} Let $\E\in \mathbf{DES}$. We say that $\E$ is \textit{exceptional} if
$$\varphi_o^{\deg(o)}(M_o)=M_o\cdot\Pi_o.$$ Clearly $\E$ is
exceptional if and only if the Dieudonn\'e $\cD_o$-module
$(M_{o,0},\varphi_o^{\deg(o)})$ associated to $(M_o,\varphi_o)$ is
exceptional in the sense of Definition \ref{defnExp}. The
\textit{type} of an exceptional $\E$ is the type of
$(M_{o,0},\varphi_o^{\deg(o)})$. Similarly, we say that $\E$ is
\textit{special} (resp. \textit{superspecial}) if
$(M_{o,0},\varphi_o^{\deg(o)})$ is special (resp. superspecial).
\end{defn}

\begin{rem}
Exceptional $\cD$-elliptic sheaves do not correspond to points on
the moduli schemes constructed in \cite{Hausberger}, unless they are
superspecial.
\end{rem}

Let $\bar{D}$ be the central division algebra over $F$ with
invariants
\begin{equation}\label{eqbarD}
\inv_x(\bar{D})=\left\{
  \begin{array}{ll}
    1/d, & x=\infty; \\
    0, & x=o; \\
    \inv_x(D), & x\neq o,\infty.
  \end{array}
\right.
\end{equation}

\begin{thm}\label{thm2.2}
If $\E$ is exceptional of type $\bF$, then $\End(\E)$ has a natural
structure of a hereditary $\cO_X$-order in $\bar{D}$. This order is
maximal at every $x\in |X|-o$, and at $o$ it is isomorphic to
$\M_d(\bF,\cO_o)$.
\end{thm}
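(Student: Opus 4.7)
Via Proposition \ref{prop-latt}, an element of $B := \End_D(N,\varphi)$ lies in $\End(\E)$ if and only if its base change to $F_x$ preserves $M_x$ for every $x \in |X|$. Hence $\End(\E)$ is an $\cO_X$-order in $B$ with localizations
\[
\End(\E)\otimes_{\cO_X}\cO_x \;=\; \End_{\cD_x}(M_x,\varphi_x) \;=\; \End_{\cD_x}(M_{x,0},\varphi_x^{\deg(x)}).
\]
The plan is to compute these localizations place by place, thereby determining both the local invariants of $B$ (forcing $B \cong \bar D$) and the hereditary structure of $\End(\E)$.

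At $x \ne o, \infty$, condition (\textbf{M3}) gives $\varphi_x(M_x) = M_x$, so Proposition \ref{lemEtale} yields an isomorphism $(M_x,\varphi_x) \cong (M_x^{\varphi_x}\ctimes_{\F_q} k,\ \Id\ctimes\Fr_q)$ and $\End_{\cD_x}(M_x,\varphi_x) = \End_{\cD_x}(M_x^{\varphi_x})$. The lattice $M_x^{\varphi_x}$ is a full $\cD_x$-submodule of $N_x^{\varphi_x} \cong D_x$, and every such lattice over the maximal order $\cD_x$ is principal, hence free of rank one as a right $\cD_x$-module. Thus $\End_{\cD_x}(M_x^{\varphi_x}) \cong \cD_x$, a maximal $\cO_x$-order in $D_x$, giving $\inv_x(B) = \inv_x(D)$. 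At $x = o$, the exceptional hypothesis feeds directly into Proposition \ref{propEndM}: applied to the Dieudonn\'e $\cD_o$-module $(M_{o,0},\varphi_o^{\deg(o)})$ it yields $\End(\E)_o \cong \M_d(\bF,\cO_o)$, a hereditary $\cO_o$-order of type $\bF$ in $\M_d(F_o)$, so $\inv_o(B) = 0$.

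The main obstacle is $x = \infty$. From condition (\textbf{M1}), $(\varphi_\infty^{\deg(\infty)})^d$ acts on $M_\infty$ as $\pi_\infty^{-1}$, so the Dieudonn\'e $F_\infty$-module $(N_{\infty,0},\varphi_\infty^{\deg(\infty)})$ is isotypic of slope $-1/d$; by Theorem \ref{simpleN} it is isomorphic to $N_{d,-1}^d$ with full endomorphism algebra $\M_d(D_{d,-1})$, where $D_{d,-1}$ has invariant $1/d$ over $F_\infty$. The commuting $D_\infty \cong \M_d(F_\infty)$-action embeds into $\M_d(D_{d,-1}) \cong \M_d(F_\infty)\otimes_{F_\infty} D_{d,-1}$ (unique up to Skolem--Noether), and the double centralizer theorem gives $B_\infty \cong D_{d,-1}$, so $\inv_\infty(B) = 1/d$. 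A rigidity argument analogous to the proof of Proposition \ref{propEndM}, fixing a common $\cK_\infty$-basis compatible with the $\F_\infty$-grading, then identifies the lattice stabilizer $\End_{\cD_\infty}(M_\infty,\varphi_\infty)$ with the unique maximal $\cO_\infty$-order in $B_\infty$. Collecting the local data shows $B$ has the invariants prescribed for $\bar D$, hence $B \cong \bar D$, and the localizations exhibit $\End(\E)$ as a hereditary $\cO_X$-order in $\bar D$ with the stated types. The delicate point at $\infty$ is precisely that the \'etale input of Proposition \ref{lemEtale} is unavailable; one must instead run a direct isocrystal normalization of $(M_\infty,\varphi_\infty)$ and verify maximality of the stabilizer by hand.
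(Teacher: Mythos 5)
Your overall strategy parallels the paper's, but with a genuine reversal in the order of operations and one real gap at $\infty$.

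Route difference and a circularity: the paper first pins down $B:=\End_D(N,\varphi)\cong\bar D$ globally, by computing the $\varphi$-pair $(\widetilde F,\widetilde\Pi)$ (exceptionality forces $\ord_o(\widetilde\Pi)=1/d\deg(o)$, $\ord_\infty(\widetilde\Pi)=-1/d\deg(\infty)$, $0$ elsewhere), invoking isotypicality of $(N,\varphi)$, \cite[Thm.\ A.6]{LRS}, and the double centralizer theorem; only afterwards does it treat the local orders. You instead open by writing
$\End(\E)\otimes_{\cO_X}\cO_x=\End_{\cD_x}(M_x,\varphi_x)$
and then compute local invariants to recover $\bar D$. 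That localization identity is exactly what the paper has to work for: the natural map is a priori only injective with torsion-free cokernel, and the paper gets surjectivity by checking it becomes an isomorphism after inverting $\pi_x$, which in turn uses the already-established identifications of both sides with $\bar D_x$. In your order of operations that identification is the thing being proved, so as written the step is circular. It can be repaired — one can argue directly from (\textbf{M4}) that $\End(\E)$ is an $\cO_X$-order and then match localizations — but you need to say so; the assertion is not free.

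The genuine gap is $x=\infty$. Your computation of $B_\infty\cong D_{d,-1}$ via Theorem~\ref{simpleN} and double centralizer is fine, but the integral statement is where the work is, and the phrase "a rigidity argument analogous to Proposition~\ref{propEndM}" does not supply it: that proposition's proof rests on Proposition~\ref{lemEtale}, which needs $\varphi(M)=M$, false at $\infty$ where $M_\infty\subsetneq\varphi_\infty(M_\infty)$. The paper's actual replacement is structurally different: since $\cD_\infty\cong\M_d(\cO_\infty)$, Morita equivalence reduces to showing that for an $\cR$-lattice $M\subset N_{d,-1}$ with $M\subset\varphi_{d,-1}(M)$, the stabilizer $\End(M,\varphi_{d,-1})$ is maximal. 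The decisive input is \cite[Prop.\ B.10]{LRS}: all such lattices are $\varphi_{d,-1}^n$-shifts of one another, so one may take the standard lattice and verify by hand that $R_d\{\tau^{-1}\}/R_d\{\tau^{-1}\}(\tau^{-d}-\pi)$ sits inside $\End(M,\tau)^{\opp}$ and is already maximal. You acknowledge that a direct normalization is needed but do not carry it out; that is the missing step. Your argument at $x\ne o,\infty$ (full right $\cD_x$-lattices over a maximal order in a complete local ring are principal, hence the endomorphism ring is $\cong\cD_x$) is a slightly cleaner variant of the paper's left-order argument via \cite[(17.6)]{Reiner} and is correct.
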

\begin{proof}
Let $((N,\varphi, \iota), (M_x)_{x\in |X|})\in \mathbf{DMod}$ be the
object attached to $\E$ by Proposition \ref{prop-latt}. Giving an
endomorphism of $\E$ is equivalent to giving
$$
\psi\in \End(N,\varphi,\iota)=\End_D(N,\varphi)
$$
such that $\psi\ctimes_F F_x\in \End(N_x,\varphi_x)$ preserves the
lattice $M_x$ for all $x\in |X|$.

Let $(\widetilde{F},\widetilde{\Pi})$ be the $\varphi$-pair of
$(N,\varphi)$; see \cite[App. A]{LRS} for the definition. Since $\E$
is exceptional
\begin{align*}
\varphi_\infty^{d\cdot\deg(\infty)}(M_\infty) &=
\pi_\infty^{-1}M_\infty,\\
\varphi_o^{d\cdot\deg(o)}(M_o) &=\pi_o M_o,\\
\varphi_x(M_x)&=M_x,\quad \text{if }x\neq o,\infty.
\end{align*}
Let $h$ be the class number of $X$. The divisor
$h(\deg(\infty)o-\deg(o)\infty)$ is principal, so from the previous
equalities $\varphi^{dh}\in F$. By construction of
$(\widetilde{F},\widetilde{\Pi})$, this implies $\widetilde{F}=F$
and $\widetilde{\Pi}\in F$ has valuations
\begin{equation}\label{eq-n}
\ord_x(\widetilde{\Pi})=\left\{
  \begin{array}{ll}
    1/d\deg(o), & x=o; \\
    -1/d\deg(\infty), & x=\infty; \\
    0, & x\neq o,\infty.
  \end{array}
\right.
\end{equation}
Since $(N,\varphi)$ is isotypical \cite[Lem. 9.6]{LRS}, (\ref{eq-n})
and \cite[Thm. A.6]{LRS} imply that $A=\End(N,\varphi)$ is the
cental simple algebra over $F$ of dimension $d^4$ with invariants
$\inv_o(A)=-1/d$, $\inv_\infty(A)=1/d$, $\inv_x(A)= 0$, $x\neq
o,\infty$. $\End_D(N,\varphi)$ is exactly the centralizer of
$\iota(D^\opp)$ in $\End(N,\varphi)$. By the double centralizer
theorem \cite[Cor. 7.14]{Reiner}
$$
\End_D(N,\varphi)\otimes_F D^\opp\cong A.
$$
This implies that $\End_D(N,\varphi)$ is the central simple algebra
over $F$ of dimension $d^2$ with invariants
$$
\inv_x(\End_D(N,\varphi))=\inv_x(A)-\inv_x(D^\opp)=\inv_x(A)+\inv_x(D)
\ \mod\ \Z.
$$
Comparing the invariants, we see that $\End_D(N,\varphi)\cong
\bar{D}$.

Giving an $\cO_X$-order $\cB$ in $\bar{D}$ is equivalent to giving a
set of $\cO_x$-orders $\cB_x\subset \bar{D}_x$ for all $x\in |X|$
such that there is an $F$-basis $B$ with $\cB_x=B\cdot \cO_x$ for
almost all $x$. Since a basis of $N$ spans almost all $M_x$, we
conclude that $\End(\E)$ is an $\cO_X$-order in $\bar{D}$. Let $x\in
|X|$. There is a natural morphism
\begin{equation}\label{eq1}
\End(\E)\otimes_{\cO_{X,x}}\cO_x \to \End_{\cD_x}(M_x,\varphi_x).
\end{equation}
Here the right hand-side denotes the subring of
$\End_{D_x}(N_x,\varphi_x)$ consisting of endomorphisms which
preserve $M_x$. By a standard property of the sheaf of local
morphisms, this morphism is injective with torsion-free cokernel. On
the other hand, by \cite[Lem. B.6-B.7]{LRS}
$$
\End_{D_x}(N_x,\varphi_x)\cong \bar{D}_x,
$$
if $x\neq o$. The same isomorphism for $x=o$ follows from
Proposition \ref{propEndM}. Hence (\ref{eq1}) becomes an isomorphism
after tensoring with $F_x$, and therefore is an isomorphism itself.
To finish the proof we need to show that
$\End_{\cD_x}(M_x,\varphi_x)$ is maximal for every $x\neq o$, and is
hereditary for $x=o$.

If $x=o$, then by Proposition \ref{propEndM}
$$\End_{\cD_x}(M_x,\varphi_x)\cong
\End_{\cD_x}(M_{x,0},\varphi_x^{\deg(x)})\cong \M_d(\bF, \cO_x).$$

If $x\neq o,\infty$, then $\varphi_x(M_x)=M_x$. By Proposition
\ref{lemEtale},
$$
(M_{x,0},\varphi_x^{\deg(x)})\cong (\La_x\ctimes_{\F_x}k,
\Id\ctimes_{\F_x}\Fr_q^{\deg(x)}),
$$ where $\La_x$ is a free $\cO_x$-module of
rank $d^2$. The action of $\cD_x$ commutes with $\varphi_x$, so
$\cD_x$ is in the right order of the full $\cO_x$-lattice $\La_x$ in
$D_x$. Since $\cD_x$ is maximal, the left order $O_l(\La_x)$ of
$\La_x$ is also maximal in $D_x\cong \bar{D}_x$; see
\cite[(17.6)]{Reiner}. On the other hand, $O_l(\La_x)\subset
\End_{\cD_x}(M_{x,0},\varphi_x^{\deg(x)})$, which forces
$\End_{\cD_x}(M_x,\varphi_x)$ to be maximal.

Finally, let $x=\infty$. By \cite[Lem. 9.8]{LRS},
$$(N_x,\varphi_x)\cong
(N_{d,-1},\varphi_{d,-1})^d,
$$
with the action of $\cD_x$ being the natural right action of
$\M_d(\cO_\infty)$. By definition, this action preserves $M_{x}$, so
Morita equivalence \cite[Ch. 4]{Reiner} reduces the problem to
showing the following:

In the notation of $\S$\ref{sec1}, if $M$ is a free $\cR$-module in
$N_{d,-1}$ such that $M\otimes_{R} K = N_{d,-1}$ and $M\subset
\varphi_{d,-1}(M)$, then $\End(M,\varphi_{d,-1})$ is a maximal order
in the central division algebra over $K$ with invariant $1/d$. Let
$M$ and $M'$ be any two such lattices. By \cite[Prop. B.10]{LRS},
$M'=\varphi_{d,-1}^n(M)$ for some $n\in \Z$, so
$\End(M,\varphi_{d,-1})=\End(M',\varphi_{d,-1})$. Therefore, we can
assume that $M$ is the free left $\cR$-module generated by
$1,\tau,\dots, \tau^{d-1}$ in
$\cK\{\tau\}/\cK\{\tau\}(\tau^d-\pi^{-1})$ with
$\varphi_{d,-1}=\tau$. Note that
$$
R_d\{\tau^{-1}\}/R_d\{\tau^{-1}\}(\tau^{-d}-\pi)\subset
\End(M,\tau)^\opp
$$
But the left hand-side is a maximal order, so $\End(M,\tau)$ is also
a maximal order.
\end{proof}

\begin{thm}\label{thm2.3} Assume $\E\in \mathbf{DES}$ is exceptional.
The map
$$
\E'\to \cI=\Hom(\E,\E')
$$
establishes a bijection between the set of isomorphism classes of
exceptional $\cD$-elliptic sheaves $\E'$ of the same type as $\E$
and the isomorphism classes of locally free rank-$1$ right
$\End(\E)$-modules. Under this bijection,
$$
\End(\E')\cong O_\ell(\cI).
$$
\end{thm}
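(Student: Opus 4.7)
The plan is to use the equivalence $\mathbf{DES}\simeq\mathbf{DMod}$ of Proposition \ref{prop-latt} to reformulate everything in terms of $\varphi$-spaces and lattices, and then to realize $\Hom(\E,\E')$ as a fractional ideal of $\End(\E)$ inside $\bar{D}=\End_D(N,\varphi)$. Fix the identification of $\End_D(N,\varphi)$ with $\bar{D}$ coming from Theorem \ref{thm2.2}. The double centralizer calculation in the proof of Theorem \ref{thm2.2} depends only on the type $\bF$, so any other exceptional $\cD$-elliptic sheaf $\E'$ of type $\bF$ has a generic fibre $(N',\varphi',\iota')$ isomorphic to $(N,\varphi,\iota)$ as a $D$-$\varphi$-space; fix such an isomorphism. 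Then $\Hom(\E,\E')$ becomes identified with
\[
\cI=\{\psi\in\bar{D}\;:\;\psi M_x\subseteq M'_x\text{ for every }x\in|X|\},
\]
and the composition action of $\End(\E)$ corresponds to right multiplication inside $\bar{D}$.

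The first key point is that $\cI$ is a locally free right $\End(\E)$-module of rank $1$. Locally at $x$ this amounts to exhibiting an isomorphism of Dieudonn\'e $\cD_x$-modules $(M_x,\varphi_x)\xrightarrow{\sim}(M'_x,\varphi'_x)$, realised by a unit $\psi_x\in\bar{D}_x^\times$, for then $\cI_x=\psi_x\End_{\cD_x}(M_x,\varphi_x)=\psi_x\End(\E)_x$. At $x=o$ the proof of Proposition \ref{propEndM} builds $(M_o,\varphi_o)$ explicitly from the flag \eqref{eq-flag}, and that flag is determined up to isomorphism by $\bF$; at $x\neq o,\infty$ Proposition \ref{lemEtale} together with the maximality (and hence trivial one-sided class group) of $\cD_x$ yields uniqueness; at $x=\infty$, Morita equivalence together with \cite[Lem. 9.8, Prop. B.10]{LRS} (as used in the proof of Theorem \ref{thm2.2}) does the same.

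For the inverse direction, given a locally free rank-$1$ right $\End(\E)$-module $\cI$, trivialize $\cI_x=\psi_x\End(\E)_x$ with $\psi_x\in\bar{D}_x^\times$ and $\psi_x\in\End(\E)_x^\times$ for almost all $x$, and set $M'_x:=\psi_x M_x\subseteq N_x$. Because $\psi_x$ lies in $\End_{D_x}(N_x,\varphi_x)$, multiplication by $\psi_x$ gives an isomorphism $(M_x,\varphi_x)\xrightarrow{\sim}(M'_x,\varphi_x)$ of Dieudonn\'e $\cD_x$-modules; consequently the axioms (\textbf{M1})--(\textbf{M4}) transfer from $(M_x)$ to $(M'_x)$, the exceptionality $\varphi_o^{\deg(o)}(M'_o)=M'_o\Pi_o$ is immediate, and the type is preserved as an isomorphism invariant of Dieudonn\'e $\cD_o$-modules. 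Independence of the choice of $\psi_x$ follows because any other choice $\psi_x u_x$ with $u_x\in\End(\E)_x^\times$ satisfies $u_x M_x=M_x$. Invoking Proposition \ref{prop-latt} yields the desired $\E'\in\mathbf{DES}$, exceptional of type $\bF$.

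The two constructions are mutually inverse by tracing through the local trivializations, and the identification $\End(\E')\cong O_\ell(\cI)$ is a place-by-place computation: both sides equal $\psi_x\End(\E)_x\psi_x^{-1}$ at $x$. The main obstacle is the local rigidity statement in the second paragraph --- the proof of Theorem \ref{thm2.2} computes the local endomorphism rings but stops short of classifying Dieudonn\'e $\cD_x$-modules of type $\bF$ up to isomorphism, so one must re-examine the explicit models of Section \ref{sec1} and \cite[App. B]{LRS} to extract this uniqueness; once it is in hand, the remainder of the argument is formal.
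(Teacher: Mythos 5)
Your proposal follows the same overall strategy as the paper: pass to $\mathbf{DMod}$ via Proposition~\ref{prop-latt}, establish that $\E$ and $\E'$ are isogenous so that $\Hom(\E,\E')$ sits inside $\bar{D}$, show locally at each $x$ that $\Hom_{\cD_x}((M_x,\varphi_x),(M_x',\varphi_x'))$ is a free rank-$1$ $\cA_x$-module by producing a $\cD_x$-linear isomorphism $M_x\cong M_x'$, and run the inverse construction $M_x'=\cI_x\otimes_{\cA_x}M_x$ (which you describe, equivalently and somewhat more concretely, as $\psi_xM_x$ via a local trivialization). Your local uniqueness claims at $x=o$, at $x\neq o,\infty$, and at $x=\infty$ match the paper's, which appeals respectively to the flag presentation in the proof of Proposition~\ref{propEndM}, to Proposition~\ref{lemEtale} together with the standard local theory of maximal orders, and to Morita equivalence plus \cite[Prop.~B.10]{LRS}. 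Your observation that the left-order identity reduces to a place-by-place conjugation is a slightly more explicit version of the paper's closing remark that $O_\ell(\cI)\subset\End(\cI\otimes_\cA\E)$ are locally isomorphic hereditary orders and hence coincide.

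There is, however, one step whose stated justification does not hold up: you assert that ``the double centralizer calculation\ldots depends only on the type $\bF$, so any other exceptional $\E'$ of type $\bF$ has a generic fibre isomorphic to $(N,\varphi,\iota)$.'' The double centralizer argument computes $\End_D(N,\varphi)\cong\bar{D}$; knowing that two $D$-$\varphi$-spaces have isomorphic endomorphism algebras does not by itself yield an isomorphism between them. What the paper actually uses is that the exceptionality condition (the type $\bF$ plays no role here) pins down the $\varphi$-pair $(\widetilde{F},\widetilde{\Pi})$ via the valuation formula \eqref{eq-n}, and then Drinfeld's classification of $\varphi$-spaces by their $\varphi$-pairs (\cite[(9.12)]{LRS}) gives the isomorphism $(N,\varphi,\iota)\cong(N',\varphi',\iota')$. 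You should replace your appeal to the double centralizer theorem with this $\varphi$-pair argument; without it the identification of $\Hom(\E,\E')$ with a fractional right ideal of $\cA$ inside $\bar{D}$ is not justified. Once that citation is supplied, the rest of your argument — including the correctly flagged local rigidity points, the transfer of \textbf{(M1)}--\textbf{(M4)}, exceptionality and type under the trivializations $\psi_x$, and the verification that the two constructions are mutually inverse — goes through and matches the paper's proof.
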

\begin{proof} Denote $\cA:=\End(\E)$.
Let $\E'$ be an exceptional $\cD$-elliptic sheaf of the same type as
$\E$. Let $((N,\varphi, \iota), (M_x)_{x\in |X|})$ and
$((N',\varphi', \iota'), (M_x')_{x\in |X|})$ be the objects in
$\mathbf{DMod}$ attached to $\E$ and $\E'$, respectively, under the
equivalence of Proposition \ref{prop-latt}. From the proof of
Theorem \ref{thm2.2} we know that the $\varphi$-pairs
$(\widetilde{F},\widetilde{\Pi})$ associated to the generic fibres
of $\E$ and $\E'$ are the same. By \cite[(9.12)]{LRS}, this implies
that the generic fibres $(N,\varphi,\iota)$ and
$(N',\varphi',\iota')$ are isomorphic. (In the terminology of
\cite{LRS} this is equivalent to saying that $\E$ and $\E'$ are
isogenous.) Hence the Dieudonn\'e modules $(N_x,\varphi_x)$ and
$(N_x',\varphi_x')$ of $\E$ and $\E'$ are also isomorphic for all
$x\in |X|$. Consider the $\cD_x$-lattices $M_x\subset N_x$ and
$M_x'\subset N_x'$. We claim that there is an isomorphism
$\alpha_x:(N_x,\varphi_x)\cong (N_x',\varphi_x')$ which commutes
with $D_x$ and $\alpha(M_x)=M_x'$. When $x\neq o,\infty$, this
follows from Proposition \ref{lemEtale} and the fact that any two
maximal orders in $D_x$ are conjugate. When $x=o$, the claim follows
from the proof of Proposition \ref{propEndM}, using the assumption
that $\E$ and $\E'$ have the same type. Finally, when $x=\infty$
this follows from \cite[Prop. B.10]{LRS}. Moreover, thanks to
(\textbf{M4}), if we fix an isomorphism
$\alpha:(N,\varphi,\iota)\cong (N',\varphi',\iota')$, then for
almost all $x$ we can take $\alpha_x=\alpha\ctimes F_x$. The
argument which shows that (\ref{eq1}) is an isomorphism also implies
that for $x\in |X|$
$$
\Hom(\E,\E')\otimes_{\cO_{X,x}}{\cO_x}\cong
\Hom_{\cD_x}((M_x,\varphi_x), (M_x',\varphi_x')).
$$
Now from what was said above we conclude that $\Hom(\E,\E')$ is a
locally free rank-$1$ right $\cA$-module.

Conversely, let $\cI$ be a locally free rank-$1$ right $\cA$-module.
Define $(N', \varphi',\iota')=(N,\varphi,\iota)$ and
$M'_x=\cI_x\otimes_{\cA_x} M_x$, $x\in |X|$. It is easy to check
that the pair
$$
((N', \varphi',\iota'), (M'_x)_{x\in |X|})
$$
belongs to $\mathbf{DMod}$, hence defines a $\cD$-elliptic sheaf of
characteristic $o$ over $k$. We denote this $\cD$-elliptic sheaf by
$\cI\otimes_{\cA}\E$. Since $(M_o,\varphi_o)\cong
(M'_o,\varphi'_o)$, $\cI\otimes_{\cA}\E$ is exceptional of the same
type as $\E$.

There are natural morphisms
$$
\cI\to \Hom(\E,\cI\otimes_\cA \E)\quad\text{and}\quad
\Hom(\E,\E')\otimes_\cA \E \to \E',
$$
which are locally isomorphisms, so the two constructions are
inverses of each other, and the bijection of the theorem follows.

Finally, it is clear that $O_\ell(\cI)\subset \End(\cI\otimes_\cA
\E)$, and since both sides are locally isomorphic hereditary orders,
an equality must hold.
\end{proof}

\section{Mass-formula}\label{sMass}

Denote by $\fX_\bF$ the set of isomorphism classes of exceptional
$\cD$-elliptic sheaves of characteristic $o$ over $k$ of type $\bF$.
Using Proposition \ref{prop-latt}, one can easily show that
$\fX_\bF\neq \emptyset$; cf. the proof of \cite[Thm. 9.13]{LRS}. Let
$\E\in \fX_\bF$. Denote $\cA:=\End(\E)$. Let
$\bar{D}(\A_F):=\bar{D}\otimes_F \A_F$ and
$$
\cA(\A_F):=\prod_{x\in |X|}\cA_x\hookrightarrow \bar{D}(\A_F).
$$
The ring $\bar{D}(F)$ embeds diagonally into $\bar{D}(\A_F)$. One
consequence of Theorem \ref{thm2.3} is that there is a bijection
between $\fX_\bF$ and the double coset space
\begin{equation}\label{eq-dcs}
\bar{D}(F)^\times\bs \bar{D}(\A_F)^\times/\cA(\A_F)^\times.
\end{equation}
The order of this double coset space is infinite, which accounts for
the fact that there is a natural free action of $\Z$ on the category
of $\cD$-elliptic sheaves. In order to obtain ``finite'' spaces
while trying to classify $\cD$-elliptic sheaves, e.g. moduli schemes
of finite type over $F$, one has to mod out by this action as in
\cite{LRS}.

Let $(\cE_i, j_i,t_i)_{i\in \Z}$ be a $\cD$-elliptic sheaf. The
group $\Z$ acts by ``shifting the indices'': $$[n](\cE_i,
j_i,t_i)=(\cE_i', j_i',t_i')_{i\in \Z}$$ with $\cE_i'=\cE_{i+n}$,
$j_i'=j_{i+n}$, $t_i'=t_{i+n}$. It is clear that $\Z$ preserves the
set $\fX_\bF$.

Since $\bar{D}_\infty$ is a division algebra, the composition of the
reduced norm with the valuation at $\infty$ gives an isomorphism
$\bar{D}_\infty^\times/\cA_\infty^\times\cong \Z$.

\begin{lem}\label{lem5.1}
The action of $n\in \Z$ on the double coset space (\ref{eq-dcs})
corresponding to the action of $n$ on $\fX_\bF$ is the translation
by $n$ on $\bar{D}_\infty^\times/\cA_\infty^\times\cong \Z$.
\end{lem}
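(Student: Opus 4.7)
The plan is to trace the shift action through the equivalences of Proposition \ref{prop-latt} and Theorem \ref{thm2.3}. First I would observe that, in the $\mathbf{DMod}$ picture, the shift $[n]$ sends the datum $((N,\varphi,\iota),(M_x)_{x\in|X|})$ to $((N,\varphi,\iota),(M'_x)_{x\in|X|})$, where $M'_x=M_x$ for every $x\neq\infty$ and $M'_\infty=\varphi_\infty^n(M_\infty)$. This is immediate from the reconstruction formula for $\cE_i$ in the proof of Proposition \ref{prop-latt}, combined with the observation that the transition maps $j_i$ are isomorphisms away from $\infty$ (which follows from conditions (2) and (3) in the definition of a $\cD$-elliptic sheaf).

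Second, under the bijection of Theorem \ref{thm2.3}, $[n]\E$ corresponds to $\cI_n:=\Hom(\E,[n]\E)$, and the argument leading to (\ref{eq1}) identifies $(\cI_n)_x\cong\Hom_{\cD_x}((M_x,\varphi_x),(M'_x,\varphi_x))$ locally. For $x\neq\infty$ the lattices $M_x$ and $M'_x$ coincide, so $(\cI_n)_x=\cA_x$ and the corresponding double coset representative can be taken to be $1$ at these places. All the content of the lemma is therefore concentrated at $\infty$.

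The local computation at $\infty$ is the main step. Using \cite[Lem. 9.8]{LRS} together with Morita equivalence, I would reduce to the rank-one case: write $M_\infty$ as $L\otimes_{\cO_\infty}\cO_\infty^d$ for a full $\cR$-lattice $L$ in $V=N_{d,-1}$, and identify $\bar{D}_\infty$ with $\End(V,\tau)$, where $\tau=\varphi_{d,-1}$. The key observation is that right multiplication by $\tau$ on $V=\cK\{\tau\}/\cK\{\tau\}(\tau^d-\pi_\infty^{-1})$ defines a $\cK$-linear operator $R_\tau\in\End(V,\tau)$ with $R_\tau^d=\pi_\infty^{-1}\cdot\Id$, and (using $\Fr_q(\cR)=\cR$) with $R_\tau(L)=\tau(L)$. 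Hence $\Pi:=R_\tau^{-1}$ is a uniformizer of the maximal order $\cA_\infty=\Stab_{\bar{D}_\infty}(L)$ satisfying $\Pi^d=\pi_\infty$, and $\varphi_\infty^n(L)=\Pi^{-n}L$. Since $\cA_\infty$ is maximal, this yields
\[
(\cI_n)_\infty=\{\psi\in\bar{D}_\infty\,:\,\psi(L)\subset\Pi^{-n}L\}=\Pi^{-n}\cA_\infty
\]
as a right $\cA_\infty$-module, whose class in $\bar{D}_\infty^\times/\cA_\infty^\times\cong\Z$ is $n$ (up to the sign convention in choosing the distinguished uniformizer). This is precisely the claimed translation by $n$. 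The main obstacle is this local identification; once the Morita reduction and the explicit matching of $\varphi_\infty$ with a power of $\Pi$ are in place, the rest is routine adelic bookkeeping.
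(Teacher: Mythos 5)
Your proof follows essentially the same route as the paper's: both rest on the observation that $[n]$ leaves the $\mathbf{DMod}$ datum unchanged away from $\infty$ and replaces $M_\infty$ by $\varphi_\infty^n(M_\infty)$, and then identify the resulting local coset at $\infty$ with a power of a uniformizer of the maximal order $\cA_\infty$, concluding via $\ord_\infty\circ\Nr$. The paper is considerably terser on the local step, simply asserting that the action of $1$ is multiplication by $\Pi_\infty$ (implicitly leaning on the Morita reduction and \cite[Prop.~B.10]{LRS} already used in the proof of Theorem~\ref{thm2.2}), whereas you make this explicit by exhibiting the operator $R_\tau$; the residual sign ambiguity you note ($\Pi_\infty^{-n}$ versus $\Pi_\infty^{n}$) is a harmless normalization of the isomorphism $\bar{D}_\infty^\times/\cA_\infty^\times\cong\Z$ and does not affect the passage to (\ref{eq-5.2}).
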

\begin{proof} Let $((N,\varphi, \iota), (M_x)_{x\in |X|})$ and
$((N',\varphi', \iota'), (M_x')_{x\in |X|})\in \mathbf{DMod}$ be the
objects attached to $\E$ and $\E'=[n]\cdot\E$, respectively, under
the equivalence of Proposition \ref{prop-latt}. Since the
restriction of $\cE_i$ to $(X-\infty)\otimes k$ does not depend on
$i$, $(N',\varphi')=(N,\varphi)$, $(M_x',
\varphi_x')=(M_x,\varphi_x)$ if $x\neq \infty$, and $(M_\infty',
\varphi_\infty')=(\varphi_\infty^n(M_\infty),\varphi_\infty)$. Let
$\Pi_\infty$ be a generator of the maximal ideal of $\cA_\infty$. We
conclude that the action of $1\in \Z$ on $\fX_\bF$ corresponds to
multiplication by $\Pi_\infty$ on the double coset space
(\ref{eq-dcs}). Since $\Pi_\infty$ maps to $1$ under the
homomorphism $\ord_\infty\circ \Nr: \bar{D}_\infty^\times\to \Z$,
the lemma follows.
\end{proof}

Form (\ref{eq-dcs}) and Lemma \ref{lem5.1}, we conclude that there
is a bijection between $\fX_\bF/\Z$ and the double coset space
\begin{equation}\label{eq-5.2}
\bar{D}(F)^\times\bs
\bar{D}(\A_F^\infty)^\times/\cA(\A_F^\infty)^\times,
\end{equation}
where $\bar{D}(\A_F^\infty)=\bar{D}\otimes_F \A_F^\infty$ and
$\cA(\A_F^\infty):=\prod_{x\in |X|-\infty}\cA_x$. By the strong
approximation theorem for $\bar{D}^\times$, this double coset space
has finite cardinality. Unfortunately, in general, an explicit
expression for class numbers of hereditary orders over Dedekind
domains is not known (e.g. the order of the double coset space
above). But one can at least give an estimate on this number using
an analogue of Eichler's mass-formula.

\vspace{0.1in}

Denote by $\cA^\times$ the sheaf of invertible elements in $\cA$.
Let
$$
\Aut(\E):=\G(X,\cA^\times).
$$
Let $R=\G(X-\infty, \cO_X)$ and $\La=\G(X-\infty, \cA)$. Then $\La$
is a hereditary $R$-order in $\bar{D}$. Clearly
$\La^\times=\G(X-\infty, \cA^\times)$.
\begin{lem}\label{lem6.2} The natural restriction map $\Aut(\E)\to \La^\times$ is
an isomorphism, and  $\La^\times\cong \F_{q^s}^\times$ for some $s$
dividing $d$.
\end{lem}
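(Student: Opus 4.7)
The plan is to establish the restriction isomorphism first, then to extract the structure of $\La^\times$ from the finiteness of global sections of a coherent sheaf on a proper curve.

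For the restriction map, injectivity is immediate since $\cA$ is locally free (hence torsion-free) and $X-\infty$ is dense in $X$. For surjectivity, I would show that every $a\in \La^\times$ automatically lies in $\cA_\infty^\times$. The natural tool is the reduced norm $\Nr\colon \bar{D}^\times\to F^\times$. By definition $a\in \cA_x^\times$ for every $x\neq \infty$, so $\Nr(a)\in \cO_x^\times$ for all such $x$. The product formula for $F^\times$ then forces $\ord_\infty(\Nr(a))=0$. By Theorem \ref{thm2.2}, $\cA_\infty$ is the unique maximal order in the division algebra $\bar{D}_\infty$ of invariant $1/d$, and its units are exactly the elements on which the (unique) valuation of $\bar{D}_\infty$ vanishes; since this valuation agrees with $\ord_\infty\circ\Nr$, we conclude $a\in \cA_\infty^\times$, so $a$ extends to a section of $\cA^\times$ over all of $X$.

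For the structure of $\La^\times$, I would first identify $\Aut(\E)=\G(X,\cA^\times)$ with the unit group of the ring $H^0(X,\cA)$: a global section is a unit precisely when it is a unit in every stalk, which is the same as being a section of $\cA^\times$. Since $X$ is projective over $\F_q$ and $\cA$ is coherent, $H^0(X,\cA)$ is a finite-dimensional $\F_q$-algebra; and since $H^0(X,\cA)\hookrightarrow \La\hookrightarrow \bar{D}$, it has no zero divisors. A finite integral domain is a field, so $H^0(X,\cA)\cong \F_{q^s}$ for some $s\geq 1$, yielding $\La^\times\cong \F_{q^s}^\times$.

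To see $s\mid d$, note that $\F_{q^s}\subset \bar{D}$ and the compositum $F\cdot \F_{q^s}$ inside $\bar{D}$ is a commutative subring of the central division algebra $\bar{D}$, hence a subfield of degree dividing $d$ over $F$. Since $X$ is geometrically connected over $\F_q$, the exact field of constants of $F$ is $\F_q$, so $F\cap \F_{q^s}=\F_q$ and $[F\cdot \F_{q^s}:F]=s$; therefore $s\mid d$. I expect the main obstacle to be the surjectivity of the restriction map: it relies crucially on $\bar{D}_\infty$ being a division algebra with unique maximal order $\cA_\infty$, so that invertibility at $\infty$ can be read off from the valuation of the reduced norm. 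Were $D$ split at $\infty$, the restriction map would generally fail to be surjective.
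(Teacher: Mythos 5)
Your proof is correct, and it takes a genuinely different route from the paper's. The paper first proves that $\La^\times$ is finite by observing that $\bar{D}^\times/Z^\times$ is anisotropic at $\infty$ (since $\bar{D}_\infty$ is a division algebra), so $\La^\times/R^\times$ is discrete in a compact group; it then identifies $\La^\times$ with the nonzero $\F_q$-algebraic elements of $\La$, which form a finite field $\F_{q^s}$, and only afterwards proves surjectivity of the restriction map, using that a root of unity is obviously integral over $\cO_\infty$ and hence lies in $\cA_\infty$. You reverse this order: you first prove the restriction map is an isomorphism without assuming anything about $\La^\times$, with surjectivity coming from the product formula applied to $\ord_\infty(\Nr(a))$ together with the identification of $\cA_\infty^\times$ as the valuation-zero elements of the local division algebra; then you obtain finiteness from the coherence of $\cA$ and projectivity of $X$, identifying $\Aut(\E)=H^0(X,\cA)^\times$ as the unit group of a finite $\F_q$-division ring, hence a finite field by Wedderburn. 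Both proofs use $\bar{D}_\infty$ being a division algebra in an essential way, but where the paper invokes compactness of $G(F_\infty)$, you invoke finite-dimensionality of $H^0(X,\cA)$ — arguably a more elementary and self-contained input. The argument that $s\mid d$ is the same in both. One small point worth stating explicitly in your surjectivity step: the normalized valuation $w$ on $\bar{D}_\infty^\times$ equals $\ord_\infty\circ\Nr$ (with $w(\bar{D}_\infty^\times)=\Z$), and $\cA_\infty=\{a:w(a)\geq 0\}$ is a local ring with $\cA_\infty^\times=\{a:w(a)=0\}$, so $w(a)=0$ alone gives $a\in\cA_\infty^\times$ without separately verifying $a\in\cA_\infty$.
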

\begin{proof} First we show that $\La^\times\cong \F_{q^s}^\times$ for some $s$
dividing $d$. Let $Z$ be the center of $\bar{D}$ as an algebraic
group. Then $G=\bar{D}^\times/Z^\times$ is a projective algebraic
variety over $F$. Since $\bar{D}_\infty$ is a division algebra,
$G(F_\infty)$ is compact in $\infty$-adic topology, and contains
$\La^\times/R^\times$ as a discrete subgroup. Hence
$\La^\times/R^\times$ is finite, and as $R^\times\cong \F_q^\times$
is finite, $\La^\times$ is finite. Let $\la\in \La^\times$. Since
$\la^n=1$ for some $n$, $\la$ is algebraic over $\F_q$. Conversely,
it is clear that if $\la\in \La$ is algebraic over $\F_q$ and
$\la\neq 0$, then $\la\in \La^\times$. Let $\La^\alg$ be the subset
of $\La$ consisting of elements which are algebraic over $\F_q$. It
is easy to show that $\La^\alg$ is a field extension of $\F_q$; see
\cite[p. 383]{DvG}. Let $\La^\alg\cong \F_{q^s}$. Then $\F_{q^s}F$
is a field extension of $F$ of degree $s$ contained in $\bar{D}$.
This implies that $s$ divides $d$ (see \cite[Prop. A.1.4]{Laumon}),
so $\La^\times=\La^\alg-0=\F_{q^s}^\times$.

Let $\la\in \La^\times\subset \bar{D}\subset \bar{D}_\infty$. Since
$\bar{D}_\infty$ is a division algebra, $\cA_\infty$ is its unique
maximal order which is characterized as being the integral closure
of $\cO_\infty$ in $\bar{D}_\infty$. As $\la$ is obviously integral,
$\la\in \cA_\infty$, so $\la$ extends to a global section of
$\cA^\times$. This implies that $\Aut(\E)\to \La^\times$ is
surjective. It is clear that a section of $\cA^\times$ generically
generates a finite extension of $\F_q$. Hence if such a section is
$1$ on $X-\infty$, then it is identically $1$, so $\Aut(\E)\to
\La^\times$ is also injective.
\end{proof}

Let $\E_1,\dots, \E_h$ be representatives of $\fX_\bF/\Z$, and let
$w_i=\#\Aut(\E_i)$, $1\leq i\leq h$. Consider the sum
$$
\Mass(\bF):=(q-1)\sum_{i=1}^h \frac{1}{w_i}.
$$
The double coset space (\ref{eq-5.2}) is in bijection with
isomorphism classes of locally free rank-$1$ right $\La$-modules.
Let $I_1,\dots, I_h$ represent the isomorphism classes of such
modules. Let
$$\La_i=\G(X-\infty, \End(\E_i)),\quad 1\leq i\leq h.
$$
From Theorem \ref{thm2.3} one deduces that $O_\ell(I_i)=\La_i$.
Hence using Lemma \ref{lem6.2}
$$
\Mass(\bF)=\sum_{i=1}^h (\La_i^\times:R^\times)^{-1}.
$$
According to \cite{DvG}, it is possible to give a formula for this
last sum in terms of the invariants of $F$, $D$ and $\bF=(f_0,\dots,
f_{d-1})$. For $x\neq o$, $D_x\cong \M_{\kappa_x}(\Delta_x)$, where
$\Delta_x$ is a central division algebra over $F_x$ of index $e_x$.
We always have $\kappa_xe_x=d$, and $e_x=1$ if $x\not\in \Ram$. Let
$$
\cT^o=\prod_{\substack{x\in \Ram\\ x\neq o}}\prod_{\substack{1\leq j\leq d-1\\
j\not\equiv 0\ \mod\ e_x}}(q_x^j-1)
$$
and
$$
\cT_o=\frac{\prod_{1\leq j\leq d}(q_o^j-1)}{\prod_{0\leq i\leq
d-1}\prod_{1\leq j\leq f_i}(q_o^j-1)}.
$$
If we denote by $h(A)$ the class number of $A$, then \cite[(1)]{DvG}
specializes to
\begin{equation}\label{eq-mass}
\Mass(\bF)=h(A)\cdot \cT^o\cdot \cT_o\cdot\prod_{i=1}^{d-1}
\zeta_X(-i).
\end{equation}
From this we get our desired explicit estimate of the order of
(\ref{eq-5.2}):
$$
\Mass(\bF)\leq \#(\fX_\bF/\Z)\leq \frac{q^d-1}{q-1}\cdot \Mass(\bF).
$$

\vspace{0.1in}

We end this section with a geometric application of previous
results. Let $d=2$, and fix a closed finite subscheme $\fn\neq
\emptyset$ of $X-\infty-o$. Denote by $\Ell_{\cD,\fn}$ the modular
curve of $\cD$-elliptic sheaves which are special at $o$ in the
sense of \cite{Hausberger}, equipped with level-$\fn$ structures,
modulo the action of $\Z$.

\begin{rem}
The definition of $\cD$-elliptic sheaves in \cite{Hausberger}
includes a ``normalization'' condition which requires the
Euler-Poincar\'e characteristic of $\cE_0$ to be in the interval
$[0,d)$. The resulting category is equivalent to the quotient of the
category of $\cD$-elliptic sheaves by the action of $\Z$ as is done
in \cite{LRS}.
\end{rem}

According to Theorems 6.4 and 8.1 in \cite{Hausberger},
$\Ell_{\cD,\fn}$ is a fine moduli scheme which is projective of
relative dimension $1$ over $X'=(X-\fn-\infty-\Ram)\cup\{o\}$. It is
smooth over $X'$ except at $o$. The fibre of $\Ell_{\cD,\fn}$ over
$o$ is a reduced singular curve whose only singular points are
ordinary double points, and whose normalization is a disjoint union
of finitely many rational curves.

\begin{prop}\label{prop4.4} The singular points of $\Ell_{\cD,\fn}\times_{X'}\Spec(\overline{\F}_o)$
are represented by the isomorphisms classes of pairs
$(\E,\theta_\fn)$, where $\E$ is a superspecial $\cD$-elliptic sheaf
of characteristic $o$ over $k$, and $\theta_\fn$ is a level-$\fn$
structure on $\E$.
\end{prop}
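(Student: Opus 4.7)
My plan is to combine the \v{C}erednik--Drinfeld-style uniformization of $\Ell_{\cD,\fn}$ near $o$ from \cite{Hausberger} with the Dieudonn\'e-module criterion for exceptionality developed in Sections \ref{sec1}--\ref{Sec3}.

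First, I would invoke \cite[Thm.~6.4, Thm.~8.1]{Hausberger}: the formal completion of $\Ell_{\cD,\fn}$ along its fibre over $o$ is uniformized (up to a finite \'etale cover encoding the level-$\fn$ structure) by Drinfeld's formal upper half-plane $\widehat{\Omega}^2_{F_o}\ctimes\overline{\F}_o$. The special fibre of $\widehat{\Omega}^2_{F_o}$ is the tree-like union of projective lines whose dual graph is the Bruhat-Tits tree of $\PGL_2(F_o)$: smooth components are indexed by vertices (homothety classes of $\cO_o$-lattices in $F_o^2$), while ordinary double points are indexed by edges (pairs of adjacent classes). Via Proposition \ref{prop-latt}, points of this fibre correspond to special $\cD$-elliptic sheaves over $\overline{\F}_o$ equipped with a level structure.

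Next, I would match this geometric dichotomy with the algebraic one. For a special $\E$ (necessarily of type $\bF=(1,1)$, since $d=2$), the injections $\Pi_o$ and $\varphi_o^{\deg(o)}$ on the associated Dieudonn\'e $\cD_o$-module $(M_{o,0},\varphi_o^{\deg(o)})$ produce two $\cO_o$-sublattices. When $\E$ is not exceptional, the condition $\im(\Pi_o)\neq\im(\varphi_o^{\deg(o)})$ singles out a single homothety class (one vertex of the tree), corresponding to a smooth point on a single Hausberger component; consistently, Theorem \ref{thm2.2} then gives that $\End(\E)_o$ is a maximal order in $\bar{D}_o\cong\M_2(F_o)$. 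When $\E$ is exceptional---equivalently, in type $(1,1)$, \emph{superspecial}---the flag of lattices (\ref{eq-flag}) constructed in the proof of Proposition \ref{propEndM} yields two genuinely adjacent homothety classes, i.e.\ an edge of the tree, and Theorem \ref{thm2.2} switches $\End(\E)_o$ to the non-maximal Iwahori order $\M_2((1,1),\cO_o)$. The latter configuration corresponds to a node of the special fibre. Since $\fn\subset X-o-\infty$, the level structure $\theta_\fn$ is unaffected by the local data at $o$, so the bijection extends to pairs $(\E,\theta_\fn)$.

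The principal obstacle is this second step: making precise, via Hausberger's uniformization, the equivalence between the combinatorial position (vertex versus edge) of the Dieudonn\'e $\cD_o$-module in the Bruhat-Tits tree and the formal-local structure (smooth versus nodal) of the corresponding point of $\Ell_{\cD,\fn}\times_{X'}\Spec(\overline{\F}_o)$. This is essentially a compatibility statement between two descriptions of the same formal neighborhood, but verifying it requires tracking how Hausberger's uniformization functor acts on Dieudonn\'e modules and checking that the ``branching'' of lattices on the algebraic side corresponds exactly to the two irreducible components of the fibre passing through a node on the geometric side.
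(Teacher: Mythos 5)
Your overall strategy matches the paper's: invoke Hausberger's uniformization, then translate the smooth-versus-nodal dichotomy on the special fibre of Drinfeld's $\widehat{\Omega}^2$ into the special-versus-superspecial dichotomy on Dieudonn\'e $\cD_o$-modules. However, you leave two genuine gaps.

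First, you skip a necessary preliminary. Hausberger's moduli problem uses his own notion of ``special'' (\cite[Def.~3.5]{Hausberger}), defined in terms of the sheaf data, while Definition~\ref{def3.1} here is formulated in terms of the Dieudonn\'e $\cD_o$-module $(M_{o,0},\varphi_o^{\deg(o)})$. That these two notions agree is not automatic; the paper establishes it by citing \cite[Prop.~4.1.2]{Genestier}, \cite[Prop.~2.16]{Hausberger}, and \cite[Prop.~2.1]{DrinfeldFS}. Without this, you cannot even assert that $\Ell_{\cD,\fn}\times_{X'}\Spec(\overline{\F}_o)$ parametrizes pairs $(\E,\theta_\fn)$ with $\E$ special in the sense of Definition~\ref{def3.1}.

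Second, the step you explicitly flag as the ``principal obstacle''---identifying the formal-local structure (smooth vs.\ node) of a point of $\widehat{\Omega}^2\otimes k$ with the position (vertex vs.\ edge) of the corresponding Dieudonn\'e $\cD_o$-module, or equivalently with $\End_{\cD_o}$ being maximal vs.\ Iwahori---is exactly the content of Genestier's Proposition~II.2.7.1 together with the main result of Chapter~III of \cite{Genestier}, which the paper cites verbatim. Those results show that $\cT=\widehat{\Omega}^2\otimes_{\F_o}k$ is the moduli of special Dieudonn\'e $\cD_o$-modules and that its singular points are precisely the superspecial ones. Your Bruhat--Tits heuristic is the right picture, but the proof should appeal to these results rather than re-derive the identification from scratch; otherwise, as you acknowledge, nothing has been proved. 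Once you add both citations, the argument closes.
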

\begin{proof} Denote
$Y:=\Ell_{\cD,\fn}\times_{X'}\Spec(\overline{\F}_o)$. As follows
from \cite[Prop. 4.1.2]{Genestier}, \cite[Prop. 2.16]{Hausberger}
and \cite[Prop. 2.1]{DrinfeldFS}, a $\cD$-elliptic sheaf of
characteristic $o$ over $k$ is special in the sense of Definition
\ref{def3.1} if and only if it is special in sense of \cite[Def.
3.5]{Hausberger}. Hence $Y$ classifies the pairs $(\E,\theta_\fn)$,
where $\E$ is a special $\cD$-elliptic sheaf over $k$ in the sense
of Definition \ref{def3.1}, and $\theta_\fn$ is a level-$\fn$
structure on $\E$.

Let $\cT:=\widehat{\Omega}^2\otimes_{\F_o} k$, where
$\widehat{\Omega}^2$ is the formal scheme over $\Spf(\cO_o)$
corresponding to Drinfeld's upper-half plane. By a theorem of
Drinfeld \cite{DrinfeldSym}, $\cT$ parametrizes special Dieudonn\'e
$\cD$-modules over $k$ (``special'' in the sense of Definition
\ref{defnExp}) equipped with some extra data. Proposition II.2.7.1
and the main result of Chapter III in \cite{Genestier} imply that
the singular points of $\cT$ correspond exactly to superspecial
Dieudonn\'e $\cD$-modules.

Finally, Hausberger's uniformization theorem \cite[Thm.
8.1]{Hausberger} relates $\cT$ and $Y$ as functors. This theorem,
combined with the previous two paragraphs, implies that a closed
point on $Y$ corresponding to $(\E,\theta_\fn)$ is singular if and
only if $\E$ is superspecial.
\end{proof}

Let $\cD_\fn=\cD\otimes_{\cO_X}(\cO_X/\cal{N})$, where $\cal{N}$ be
the ideal sheaf of $\fn$. $\F_q^\times$ embeds diagonally into
$\cD_n^\times$. Denote $d(\fn)=\#(\cD_\fn^\times/\F_q^\times)$.

\begin{cor}
The number of singular points on
$\Ell_{\cD,\fn}\times_{X'}\Spec(\overline{\F}_o)$ is equal to
$$
d(\fn)\cdot h(A)\cdot \zeta_X(-1)\cdot (q_o+1)\cdot \prod_{x\in
\Ram-o}(q_x-1).
$$
\end{cor}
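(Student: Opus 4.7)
The plan is to combine Proposition~\ref{prop4.4} with the mass-formula (\ref{eq-mass}). First, by Proposition~\ref{prop4.4}, the singular $\overline{\F}_o$-points of $\Ell_{\cD,\fn}$ are in bijection with isomorphism classes of pairs $(\E,\theta_\fn)$, where $\E$ is a superspecial $\cD$-elliptic sheaf of characteristic $o$ over $k$ taken modulo the $\Z$-action (since this quotient is built into the definition of $\Ell_{\cD,\fn}$) and $\theta_\fn$ is a level-$\fn$ structure on $\E$. Because $d=2$, being superspecial forces the type $\bF=(1,1)$, so the underlying sheaves are exactly the $\E_1,\dots,\E_h$ of $\S$\ref{sMass}.

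Next, for each $\E_i$ I would identify the set of level-$\fn$ structures with a torsor under $\cD_\fn^\times$, of cardinality $(q-1)\cdot d(\fn)$. Fineness of the moduli scheme $\Ell_{\cD,\fn}$ (\cite[Thm. 6.4]{Hausberger}) says that the pair $(\E_i,\theta_\fn)$ has no non-trivial automorphisms, which is equivalent to saying that $\Aut(\E_i)$ acts freely on this torsor. Consequently the number of isomorphism classes of pairs with underlying sheaf $\E_i$ equals $(q-1)d(\fn)/w_i$, and summing over $i$ yields, by the definition of $\Mass(\bF)$,
$$
\#\{\text{singular points}\}\;=\;\sum_{i=1}^{h}\frac{(q-1)d(\fn)}{w_i}\;=\;d(\fn)\cdot\Mass((1,1)).
$$

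It then remains to evaluate (\ref{eq-mass}) at $d=2$ and $\bF=(1,1)$. The product $\prod_{i=1}^{d-1}\zeta_X(-i)$ collapses to the single factor $\zeta_X(-1)$. For each $x\in\Ram-\{o\}$ the index $e_x$ divides $d=2$ and is strictly greater than $1$ (as $D$ is ramified at $x$), hence $e_x=2$; the inner product in the definition of $\cT^o$ therefore contains only the single factor $q_x-1$, giving $\cT^o=\prod_{x\in\Ram-o}(q_x-1)$. A direct computation yields
$$
\cT_o=\frac{(q_o-1)(q_o^2-1)}{(q_o-1)^2}=q_o+1.
$$
Substituting these values into (\ref{eq-mass}) and multiplying by $d(\fn)$ produces the required formula.

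The main obstacle is the torsor/freeness claim for level structures, i.e.\ that level-$\fn$ structures on a fixed $\E_i$ form a $\cD_\fn^\times$-torsor on which $\Aut(\E_i)$ acts freely; this is to be extracted from Hausberger's fine-moduli construction, using the assumption $\fn\cap(\Ram\cup\{o,\infty\})=\emptyset$. Once that is granted, the rest of the argument is bookkeeping with the mass-formula of $\S$\ref{sMass}.
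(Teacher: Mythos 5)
Your proposal follows essentially the same path as the paper: reduce to counting superspecial (i.e.\ exceptional of type $(1,1)$) sheaves via Proposition~\ref{prop4.4}, count level-$\fn$ structures up to automorphism to get $d(\fn)\cdot\Mass((1,1))$, and specialize the mass formula~(\ref{eq-mass}) at $d=2$, $\bF=(1,1)$. The only addition is that you spell out the justification for the count $d(\fn)(q-1)/w_i$ via a free action of $\Aut(\E_i)$ on the $\cD_\fn^\times$-torsor of level structures, which the paper asserts more briefly; the arithmetic with $\cT^o$, $\cT_o$ and $\zeta_X(-1)$ is correct.
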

\begin{proof}
A superspecial $\cD$-elliptic sheaf over $k$ is the same thing as an
exceptional $\cD$-elliptic sheaf of type $\bF=(1,1)$ (we assume
$d=2$). Fix such a $\cD$-elliptic sheaf $\E$ and let $w=\#\Aut(\E)$.
The number of all level-$\fn$ structures on $\E$, up to an
isomorphism, is equal to $d(\fn)\frac{q-1}{w}$. This combined with
Proposition \ref{prop4.4} implies that the number of singular points
on $\Ell_{\cD,\fn}\times_{X'}\Spec(\overline{\F}_o)$ is equal to
$d(\fn)\cdot\Mass(1,1)$. Now the corollary follows from
(\ref{eq-mass}).
\end{proof}

\section{Supersingular $\cD$-elliptic sheaves}\label{sLast} We keep the notation and assumptions of $\S$\ref{Sec2}.
In this section we assume $o\not \in \Ram$, i.e., $D_o\cong
\M_d(F_o)$.
\begin{defn}
$\E\in \mathbf{DES}$ is \textit{supersingular} if for all large
enough integers $n$
$$
\varphi_o^n(M_o)\subset \pi_oM_o.
$$
\end{defn}

Let $\bar{D}$ be the central division algebra over $F$ with
invariants
$$
\inv_x(\bar{D})\left\{
  \begin{array}{ll}
    1/d, & x=\infty; \\
    -1/d, & x=o; \\
    \inv_x(D), & x\neq o,\infty.
  \end{array}
\right.
$$

\begin{thm}\label{Thm2.6}
If $\E$ is a supersingular $\cD$-elliptic sheaf of characteristic
$o$ over $k$, then $\End(\E)$ is a maximal $\cO_X$-order in
$\bar{D}$. There is a bijection between the set of isomorphism
classes of supersingular $\cD$-elliptic sheaves over $k$ and the
isomorphism classes of locally free rank-$1$ right
$\End(\E)$-modules.
\end{thm}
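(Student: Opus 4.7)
The plan is to mimic the proofs of Theorems \ref{thm2.2} and \ref{thm2.3}, passing to the $\varphi$-pair of the generic fibre to pin down the ambient algebra and then examining the local structure of $\End(\E)$ place by place. The main novelty relative to the exceptional case is at $o$: since $D_o\cong\M_d(F_o)$ and $\cD_o\cong\M_d(\cO_o)$, Morita equivalence reduces the $o$-local computation to a lattice problem inside a single simple Dieudonn\'e $F_o$-module, in close analogy with the $\infty$-step of the proof of Theorem \ref{thm2.2}.

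First I would compute the $\varphi$-pair $(\widetilde{F},\widetilde{\Pi})$ attached to $(N,\varphi)$. The supersingular hypothesis forces the Newton polygon of the Dieudonn\'e module $(N_{o,0},\varphi_o^{\deg(o)})$ to be a single line of slope $1/d$; combined with $\varphi_\infty^{d\deg(\infty)}(M_\infty)=\pi_\infty^{-1}M_\infty$ and $\varphi_x(M_x)=M_x$ for $x\neq o,\infty$, and using that $h(\deg(\infty)o-\deg(o)\infty)$ is principal (with $h$ the class number of $X$), one obtains $\varphi^{dh}\in F$. Thus $\widetilde{F}=F$ and
\[
\ord_x(\widetilde{\Pi})=\begin{cases} 1/d\deg(o), & x=o;\\ -1/d\deg(\infty), & x=\infty;\\ 0, & x\neq o,\infty.\end{cases}
\]
By isotypicality \cite[Lem.~9.6]{LRS} and \cite[Thm.~A.6]{LRS}, $A=\End(N,\varphi)$ is the central simple $F$-algebra of dimension $d^4$ with $\inv_o(A)=-1/d$, $\inv_\infty(A)=1/d$, and $0$ elsewhere. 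The double centralizer theorem \cite[Cor.~7.14]{Reiner} then gives $\End_D(N,\varphi)\cong\bar{D}$; the only difference from Theorem \ref{thm2.2} is that $\inv_o(D)=0$ now shifts $\inv_o(\bar{D})$ from $0$ to $-1/d$.

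Next I would verify that, for every $x\in|X|$, the natural map
\[
\End(\E)\otimes_{\cO_{X,x}}\cO_x\longrightarrow\End_{\cD_x}(M_x,\varphi_x)
\]
is an isomorphism onto a maximal $\cO_x$-order in $\bar{D}_x$. At $x=\infty$ and at $x\neq o,\infty$ the arguments from the proof of Theorem \ref{thm2.2} carry over verbatim. At $x=o$ the new ingredient appears: Morita equivalence \cite[Ch.~4]{Reiner} applied to the identification $\cD_o\cong\M_d(\cO_o)$ reduces the claim to showing that for any $\cO_o$-lattice $M'$ in the simple Dieudonn\'e $F_o$-module $(N_{d,1},\varphi_{d,1})$ satisfying $\pi_oM'\subset\varphi_{d,1}(M')\subset M'$, the ring $\End(M',\varphi_{d,1})$ is the maximal order in the central division algebra $D_{d,1}$ over $F_o$ of invariant $-1/d$. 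This is precisely the $o$-side analogue of the $\infty$-step in the proof of Theorem \ref{thm2.2}: by \cite[Prop.~B.10]{LRS} all such lattices are $\varphi_{d,1}$-translates of each other, so one may reduce to an explicit standard lattice, whose endomorphism ring is visibly the maximal order in $D_{d,1}$.

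Finally, the bijection with isomorphism classes of locally free rank-$1$ right $\End(\E)$-modules follows by repeating the proof of Theorem \ref{thm2.3}. The only clause requiring reverification is that for any two supersingular $\cD$-elliptic sheaves $\E,\E'$ there is a $\cD_o$-linear isomorphism $(N_o,\varphi_o)\cong(N_o',\varphi_o')$ carrying $M_o$ to $M_o'$; via Morita this again reduces to \cite[Prop.~B.10]{LRS}, while the remaining places work exactly as before. I expect the main obstacle to be precisely the $o$-local maximality claim, but it is settled by transferring the $\infty$-side lattice analysis of Theorem \ref{thm2.2} across the Morita equivalence $\cD_o\cong\M_d(\cO_o)$.
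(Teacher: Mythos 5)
Your proposal is correct and follows essentially the same route as the paper, which simply says to repeat the proofs of Theorems \ref{thm2.2} and \ref{thm2.3} with two modifications: use the argument of \cite[Prop.~9.9, Cor.~9.10]{LRS} to identify $\End_D(N,\varphi)$ with $\bar D$, and use $(N_{o,0},\varphi_o^{\deg(o)})\cong(N_{d,1},\varphi_{d,1})^d$ from \cite[Lem.~9.8]{LRS} together with the $\infty$-side lattice argument (via Morita and \cite[Prop.~B.10]{LRS}) to get maximality at $o$. The one place worth tightening is your assertion that supersingularity "forces the Newton polygon to be a single line of slope $1/d$": this is precisely the content of \cite[Lem.~9.8]{LRS} and should be cited, rather than treated as immediate from the definition.
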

\begin{proof}
The proof is mostly the same as the proof of Theorems \ref{thm2.2}
and \ref{thm2.3}. Two places where the argument needs to be slightly
modified are the following:

First, one shows that $\End(\E)$ is generically isomorphic to
$\bar{D}$ in this case by using the argument in the proof of
Proposition 9.9 and Corollary 9.10 in \cite{LRS}.

Second, $(N_{o,0},\varphi_o^{\deg(o)})\cong
(N_{d,1},\varphi_{d,1})^d$, by \cite[Lem. 9.8]{LRS}. Now to show
that $\End(\E)$ is maximal at $o$, one can proceed as in the
$x=\infty$ case of Theorem \ref{thm2.2}.
\end{proof}

Suppose $D=\M_d(F)$ and $\cD=\M_d(\cO_X)$. Let
$A=\G(X-\infty,\cO_X)$. Morita equivalence establishes an
equivalence between the category of $\cD$-elliptic sheaves of
characteristic $o$ over $k$, and the category of rank-$d$ elliptic
sheaves over $k$ with pole at $\infty$; cf. \cite[3.1.4]{Carayol}.
On the other hand, by a theorem of Drinfeld this latter category
modulo the action of $\Z$ is equivalent to the category of rank-$d$
Drinfeld $A$-modules over $k$; see \cite{DrinfeldES} and
\cite[$\S$2]{Carayol}. Hence the category of $\cD$-elliptic sheaves
over $k$ modulo the action of $\Z$ is equivalent to the category of
rank-$d$ Drinfeld $A$-modules over $k$. Under this equivalence,
supersingular $\cD$-elliptic sheaves correspond to supersingular
Drinfeld modules (see \cite{GekelerJA} for the definition of
supersingular Drinfeld modules). Indeed, the supersingular
$\cD$-elliptic sheaves and Drinfeld modules are uniquely
characterized by the fact that their endomorphism algebra is
$\bar{D}$. One concludes that in this case Theorem \ref{Thm2.6}
specializes to \cite[Thm. 4.3]{GekelerJA}.


\end{document}